\theoremstyle{plain}
\newtheorem{thm}{Theorem}[section]
\newtheorem{lem}[thm]{Lemma}
\newtheorem{lemma}[thm]{Lemma}
\newtheorem{prop}[thm]{Proposition}
\newtheorem{rem}[thm]{Remark}
\theoremstyle{definition}
\newtheorem{definition}[thm]{Definition}
\newcommand{\refT}[1]{Theorem~\ref{#1}}
\newcommand{\refL}[1]{Lemma~\ref{#1}}
\newcommand{\refPp}[1]{Proposition~\ref{#1}}
\newcommand\lrset[1]{\ensuremath{\left\{#1\right\}}}
\newcommand\lrpar[1]{\left(#1\right)}
\newcommand\lrcpar[1]{\left\{#1\right\}}
\newcommand\abs[1]{|#1|}
\newcommand\ceil[1]{\lceil#1\rceil}
\newcommand\floor[1]{\lfloor#1\rfloor}
\newcommand{\norm}[1]{\left\lvert#1\right\rvert}
\newcommand{\hnorm}[1]{\left\lVert#1\right\rVert}
\newcommand{\E}{\mathbb{E}}
\newcommand{\R}{\mathbb{R}}
\newcommand{\sparse}{{\rm Sparse}}
\newcommand{\sfe}{{\mathbb{S}^{n-1}}}
\newcommand{\supp}{{\rm Supp}}
\newcommand{\comp}{{\rm Comp}}
\newcommand{\incomp}{{\rm Incomp}}
\newcommand{\dist}{{\rm dist}}
\newcommand{\colspan}{{\rm colspan}}
\newcommand{\cE}{\mathcal{E}}
\newcommand{\cI}{\mathcal{I}}
\newcommand{\cJ}{\mathcal{J}}
\newcommand{\cN}{\mathcal{N}}
\newcommand{\bS}{\mathbb{S}}
\newcommand{\cS}{\mathcal{S}}
\newcommand{\cT}{\mathcal{T}}
\newcommand{\bp}{\mathbb{P}}
\newcommand{\ZZ}{\mathbb{Z}}
\newcommand{\RR}{\mathbb{R}}
\newcommand{\eps}{\varepsilon}
\title{On the Smallest Singular Value of Log-Concave Random Matrices}
\author{Galyna V. Livshyts}
\affil{Georgia Institute of Technology, School of Mathematics, Atlanta, GA, USA 30332 \texttt{glivshyts6@math.gatech.edu}}
\author{Manuel Fernandez V}
\affil{Georgia Institute of Technology, School of Mathematics, Atlanta, GA, USA 30332 
\texttt{mfernandez39@gatech.edu}}
\author{Stephanie Mui}
\affil{Georgia Institute of Technology, School of Mathematics, Atlanta, GA, USA 30332 
\texttt{smui3@gatech.edu}}
\date{}
\begin{document}

\maketitle 

\begin{abstract}

Let $A$ be an $N\times n$ random matrix whose entries are coordinates of an isotropic log-concave random vector in $\R^{Nn}$. We prove sharp lower tail estimates for the smallest singular value of $A$ in the following cases: (1) when $N=n$ and $A$ is drawn from an unconditional distribution, with no independence assumption; (2) when the columns of $A$ are independent and $N\geq n$; (3) when $A$ is sufficiently tall, that is $N\geq (1+\lambda)n$ for any positive constant $\lambda$. 
\end{abstract}

\section{Introduction}

If $A$ is an $N\times n$ random matrix, with $N\geq n,$ its singular values are $\sigma_1(A)\geq ...\geq \sigma_n(A)$. In particular,
$$\sigma_1(A)=\sup_{x\in\sfe} |Ax|,\,\,\,\,\,\,\,\sigma_n(A)=\inf_{x\in\sfe} |Ax|,$$
where $|\cdot|$ stands for the Euclidean norm. One of the key questions in random matrix theory is bounding the condition number of a random matrix, given by $\frac{\sigma_1(A)}{\sigma_n(A)}$, from above. This question is directly related to the speed of algorithms solving the system of equations $Ax=b,$ see e.g. \cite{RudVer-icm}, \cite{taovu-annals}. In particular, non-asymptotic random matrix theory focuses on estimates which hold in any large but fixed dimension with high probability (and the probability estimate improves as the dimension grows). Both upper estimates on the largest singular value and the lower estimates on the smallest singular value have been studied extensively in the non-asymptotic setting, especially in the case of matrices with a lot of independencies, including the class of matrices with all independent entries \cite{LPRT, RebTikh, RV08, RudVer-general, szarek, Tikh, TikhErd, Tikh1, Tikh-nomoments, tatarko, taovu-tall, taovu-annals, taovu, taovu-1, Versh-tall, wei}. However, without independence, little is understood about any of these questions. 

An interesting motif in High-Dimensional Probability is discovering to what extent \emph{independence} may be replaced by \emph{convexity}, while preserving various high-dimensional phenomena. One of the most notable works in this direction is the Central limit theorem for convex sets due to Klartag \cite{Kl, Kl2}, which resolved a conjecture from \cite{ABP}. More generally, concentration properties of random vectors with convexity properties are similar to those of random vectors with independencies. Two major conjectures on this subject \cite{bour1, bour, ABP, BobkovKoldobsky} were recently settled by Klartag, Lehec \cite{KL-slicing, KL-thinshell} (see also related works \cite{ChenKLS, Ronen, GueMil, KLS, K22, Kl_LK, L24, LV}). 

In this paper, we study random matrices with jointly log-concave distribution of entries. Recall that a random vector $X$ in $\R^d$ is called log-concave if it is distributed according to a density supported on a convex subset of an affine subspace, whose logarithm is a concave function. A random vector is called isotropic if $\E X=0$ and its covariance matrix ${\rm Cov}(X)=(\E(X_i X_j))$ is identity. Every centered random vector whose density has non-empty support can be made isotropic via a linear transformation. We consider an $N\times n$ random matrix $A$ whose entries together form a log-concave isotropic random vector in $\R^{Nn}$. Little is known about this ensemble of random matrices. One might hope that similar to how log-concave random vectors exhibit strong concentration properties of random vectors with independencies, this ensemble of log-concave random matrices would also exhibit phenomena reminiscent of random matrices with independent entries. To date, the only results in this direction are due to Mendelson, Pajor, Tomczak-Jaegermann \cite{Mend-et-al}, where the very tall case with independent columns was considered, and Adamczak, Litvak, Pajor, Tomczak-Jaegermann \cite{A10}, which states that with high probability, the largest singular value of nearly-square random matrices with jointly log-concave isotropic distribution with i.i.d. columns is of order $C\sqrt{n}.$ Until now, nothing was known neither of the case of the norm of the log-concave random matrix whose columns are dependent, nor of the smallest singular value of log-concave random matrices. We present sharp (up to log factors) bounds on the norm of such random matrices in a separate forthcoming paper \cite{FLMV-largest}. In the present paper, we focus on the smallest singular value. We show the optimal bound, both in terms of the value and in terms of probability, in the following cases:

\begin{itemize}
\item $N\geq (1+\lambda)n$, where $\lambda$ is any positive constant, and $A$ is log-concave and isotropic;
\item The distribution of $A$ is log-concave, isotropic and unconditional, and $N=n$;
\item $N\geq n$ and the columns of $A$ are independent, isotropic and log-concave.
\end{itemize}

Our first result is an optimal lower tail estimate for the smallest singular value when $A$ is a sufficiently tall matrix:
\begin{thm}\label{thm:tall}
 Let $\lambda > 0$. Let $A \in \R^{N \times n}$ be an isotropic log-concave matrix (that is, it has an isotropic log-concave distribution when considered as a vector in $\R^{nN}$), with $N \ge (1+\lambda)n$. Then there exists $c > 0$, depending only on $\lambda$, such that
\begin{equation}\label{eq:tall}
    \bp\lrpar{\sigma_n(A) \le c\sqrt{N}} \le e^{-c'N},
\end{equation}
where $c, c'>0$ are absolute constants.
\end{thm}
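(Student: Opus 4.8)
The plan is to reduce the lower bound on $\sigma_n(A)$ to a union bound over an $\eps$-net of the sphere $\sfe$, exploiting the fact that for a tall matrix the ``extra'' $\lambda n$ rows give enough room to absorb the net's cardinality. Concretely, I would first record the standard two-sided estimate: if $\cN$ is an $\eps$-net of $\sfe$ with $|\cN|\le (3/\eps)^n$ and $\sigma_1(A)\le K\sqrt N$ on some event, then $\sigma_n(A)\ge \inf_{x\in\cN}|Ax| - \eps K\sqrt N$, so it suffices to show that $|Ax|\ge 2c\sqrt N$ simultaneously for all $x\in\cN$ with probability $\ge 1-e^{-c'N}$, and separately that $\sigma_1(A)\le K\sqrt N$ with probability $\ge 1-e^{-c'N}$ (the latter is exactly the type of norm bound available from \cite{A10} or, more simply, can be obtained directly since we only need a crude upper bound of the form $K\sqrt N$ with $K$ absolute). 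The main work is thus the small-ball estimate for a single fixed unit vector $x$, together with a strong enough tail to beat $(3/\eps)^n$.

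For a fixed $x\in\sfe$, the vector $Ax\in\R^N$ has coordinates $(Ax)_i = \sum_{j=1}^n A_{ij}x_j$, and since $A$ as a whole is an isotropic log-concave vector in $\R^{Nn}$, each marginal $(Ax)_i$ is a log-concave random variable with variance $\sum_j x_j^2 = 1$; more importantly, the whole vector $Ax = (\mathrm{Id}_N\otimes x^\top)\,\mathrm{vec}(A)$ is a linear image of a log-concave isotropic vector, hence $Ax$ is itself log-concave and isotropic in $\R^N$ (its covariance is $(\mathrm{Id}_N\otimes x^\top)(\mathrm{Id}_N\otimes x)=\mathrm{Id}_N$). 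Now I invoke the small-ball behavior of isotropic log-concave vectors: by Paouris' theorem (or the weaker deviation estimates it subsumes), there are absolute constants $c_0,c_1>0$ with $\bp(|Ax|\le c_0\sqrt N)\le e^{-c_1\sqrt N}$. Unfortunately $e^{-c_1\sqrt N}$ does not beat $(3/\eps)^n = e^{n\log(3/\eps)}$, so a bare net argument on $\sfe$ fails — this is the crux of the difficulty. The fix is to use that we have $\lambda n$ spare rows: split the $N$ rows into a block of $n$ rows (call that submatrix $B$, which will be controlled crudely) and a block of $N-n\ge \lambda n$ rows (call it $A'$). Then $|Ax|^2 = |Bx|^2 + |A'x|^2 \ge |A'x|^2$, and for the tall sub-block we instead use the following tensorization-type observation: restricting to the block is not enough to improve the exponent either, so instead I would run the net argument differently.

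The cleaner route, which I would actually carry out, is to avoid nets on the full sphere and instead use the \emph{individual small-ball estimate plus a deterministic covering of the image}. Here is the key step: by a result on log-concave random matrices with the stated distribution (the forthcoming bound of \cite{FLMV-largest}, or the $A10$-type bound, or a direct $\eps$-net argument on the operator norm using Paouris), with probability $\ge 1-e^{-c'N}$ we have $\sigma_1(A)\le K\sqrt N$ for an absolute constant $K$. On that event, consider the random subspace $E=\colspan(A)\subset\R^N$ of dimension $\le n$; then $\inf_{x\in\sfe}|Ax|\ge \dist(0,\text{something})$ — more usefully, build a net $\cN$ of $\sfe$ with $|\cN|\le (C/\eps)^n$ and for each net point use the \emph{conditional} small-ball bound for isotropic log-concave $Ax$, but now observe that the event $\{\exists x\in\cN:\ |Ax|\le c_0\sqrt N\}$ has probability $\le (C/\eps)^n e^{-c_1\sqrt N}$ — still too weak. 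So the genuinely necessary ingredient, and the one I expect to be the main obstacle, is upgrading the single-vector small-ball exponent from $e^{-c\sqrt N}$ to $e^{-cN}$; this should follow from the tensorization/independence-like structure afforded by $N\ge(1+\lambda)n$, e.g. by writing $|Ax|^2$ as a sum over $N$ coordinate blocks and applying a large-deviation bound for log-concave vectors in $\R^N$ at scale $\sqrt N$ — precisely, Paouris-type estimates give $\bp(|Y|\le t\,\E|Y|)\le (Ct)^{c\sqrt N}$ for isotropic log-concave $Y\in\R^N$, and combined with the fact that we may instead bound $\bp(|Ax|\le c\sqrt N)$ for all $x$ in a net by a single event via a \emph{dimension reduction}: project onto the orthogonal complement of $\colspan(B)$ to kill the net, reducing to a genuinely $(N-n)$-dimensional isotropic log-concave small-ball event whose failure probability $(Ct)^{c\sqrt{N-n}}$, when we take $t$ a small absolute constant and use $N-n\ge\lambda n\asymp N$, must be shown to still dominate $(C/\eps)^n$ after choosing $\eps$ an absolute constant — and making that inequality work out, i.e. checking $c\sqrt N \gtrsim n\log(1/\eps)$ which forces the use of a sharper-than-Paouris bound or a clever choice that only a tall matrix permits, is the real heart of the argument.
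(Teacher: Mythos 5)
Your proposal correctly identifies the crux --- that a union bound over a net of cardinality $e^{Cn}$ needs a single-vector small-ball estimate with exponent of order $N$, not $\sqrt N$ --- but it never supplies the ingredient that closes this gap, and the substitutes you gesture at do not work. The paper's key tool is the Klartag--Lehec/Bizeul small-ball estimate (Theorem~\ref{thm:Bizeul}, a consequence of the resolution of the slicing problem): for an isotropic log-concave vector $X\in\R^N$ and any $y$, $\bp(|X-y|\le\eps\sqrt N)\le(C\eps)^N$. Applied to $Ax$ (which, as you correctly note, is isotropic log-concave in $\R^N$ for fixed $x\in\sfe$), this gives the $(C\eps)^N$ point-wise bound that beats the net. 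Your hope of ``upgrading'' the Paouris exponent $e^{-c\sqrt N}$ via ``tensorization/independence-like structure'' cannot succeed: the entries of $A$ have no independence whatsoever (the whole matrix is a single log-concave vector), so $|Ax|^2$ is not a sum of independent blocks, and your dimension-reduction sketch (projecting off $\colspan(B)$) destroys isotropicity after conditioning and in any case does not reduce the net cardinality. Without a slicing-type estimate you end up, as your own final sentence admits, needing $c\sqrt N\gtrsim n\log(1/\eps)$, which is false in the regime $N\asymp n$.

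A second gap is your reliance on $\sigma_1(A)\le K\sqrt N$ with probability $1-e^{-c'N}$. The cited i.i.d.-columns result of \cite{A10} does not apply here, the bound of \cite{FLMV-largest} carries a logarithmic loss, and a ``direct'' net argument for the operator norm runs into the same exponent problem: Paouris gives $\bp(|Ax|\ge Ct\sqrt N)\le e^{-t\sqrt N}$, so beating $e^{Cn}$ net points forces $t\gtrsim n/\sqrt N$, which for $N\asymp n$ yields only $\sigma_1(A)\lesssim N$, not $K\sqrt N$. The paper avoids the operator norm entirely: it uses the random-rounding net of Proposition~\ref{prop:3} (resp.\ Lemma~\ref{lem:net-full}), whose approximation error is $\frac{\eps}{\sqrt n}\hnorm{A}_{HS}$, and controls $\hnorm{A}_{HS}\le C\sqrt{Nn}$ by Paouris applied to the $Nn$-dimensional vector, at probability $1-e^{-c\sqrt{Nn}}$. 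Since $e^{-c\sqrt{Nn}}$ is weaker than the claimed $e^{-c'N}$ when $N\gg n$, the paper first proves the compressible-vector estimate (Theorem~\ref{thm:comp}) and then deduces the tall case by concatenating $\floor{N/n}$ independent copies of $A$ into a nearly-square matrix $\tilde A$, embedding $\bS^{n-1}$ into the compressible vectors of the larger sphere, so that $\sqrt{N\cdot n\floor{N/n}}\asymp N$; the regime $N/n\in(1+\lambda,2)$ is handled directly since there $\sqrt{Nn}\asymp N$ (Remark~\ref{rem:tall}). Neither this probability bookkeeping nor the concatenation device appears in your outline, so as written the argument does not reach the stated bound $e^{-c'N}$.
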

Our next result is the tight lower tail estimate for the smallest singular value when $A$ is square and unconditional (i.e. the density of $A$, when viewed as a vector, is symmetric over all coordinate hyperplanes):
\begin{thm}\label{thm:unconditional}
     Let $\eps > 0$. Let $A \in \R^{n \times n}$ be an unconditional isotropic log-concave matrix. Then 
    \begin{equation}\label{eq:unconditional}
    \bp\lrpar{\sigma_n(A) \le \frac{\eps}{\sqrt{n}}} \le C\eps + e^{-cn},
    \end{equation}
where $C,c > 0$ are absolute constants.
\end{thm}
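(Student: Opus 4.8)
The plan is to run the Rudelson--Vershynin invertibility scheme \cite{RV08}: split the sphere $\sfe$ into \emph{compressible} vectors $\comp(\delta,\rho)$ (those within Euclidean distance $\rho$ of a $\delta n$-sparse vector) and \emph{incompressible} vectors $\incomp(\delta,\rho)$, bound $\inf_x|Ax|$ on each piece, and optimize the parameters at the end. The two structural facts about the unconditional log-concave ensemble I would isolate first are: (i) for a fixed unit vector $x$, the vector $Ax\in\R^n$ is log-concave (linear image of $A$), isotropic (a one-line computation from $\operatorname{Cov}(A)=I_{n^2}$), and \emph{unconditional}, since negating the $i$-th row of $A$ is a symmetry of the law of $A$ that negates $(Ax)_i$ and fixes every other coordinate; and (ii) conditioning the law of $A$ on all columns but the $k$-th leaves $A_k$ with an unconditional (generally non-isotropic) log-concave law, with the signs of the entries of $A_k$ still i.i.d.\ uniform and independent of the conditioning, so that, writing $W$ for the other columns,
\[ \operatorname{Cov}(A_k\mid W)=\operatorname{diag}\bigl(\beta_1(W),\dots,\beta_n(W)\bigr),\qquad \beta_i(W):=\E\bigl[A_{ik}^2\mid W\bigr], \]
because the conditional means and the off-diagonal conditional second moments vanish under the sign symmetry.

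For compressible vectors, fix $x\in\sfe$. Since $Ax$ is unconditional, isotropic and log-concave, the Bobkov--Nazarov comparison of such measures with a multiple of the product exponential measure gives the sharp small-ball estimate $\bp(|Ax|\le\eps_0\sqrt n)\le(C\eps_0)^n$; choosing $\eps_0$ a small absolute constant makes this at most $e^{-c_1n}$ with $c_1$ as large as we wish. As $\comp(\delta,\rho)$ carries a $\nu$-net of cardinality $e^{C\delta\log(1/(\delta\nu))\,n}$, and $\|A\|\lesssim\sqrt n$ up to logarithmic factors for log-concave matrices \cite{A10, FLMV-largest}, a union bound over the net (with $\delta$ small and $\nu$ polynomially small) yields $\bp\bigl(\inf_{x\in\comp(\delta,\rho)}|Ax|\le c_0\sqrt n\bigr)\le e^{-cn}$. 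Running the same argument for the $n\times(n-1)$ matrix obtained by deleting a column also shows that the unit normal $\theta_k$ to the span of any $n-1$ of the columns is incompressible with probability at least $1-e^{-cn}$.

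For incompressible vectors, the standard distance reduction gives
\[ \bp\Bigl(\inf_{x\in\incomp(\delta,\rho)}|Ax|\le\tfrac{\rho\eps}{\sqrt n}\Bigr)\le\frac1{\delta n}\sum_{k=1}^n\bp\bigl(\dist(A_k,H_k)\le\eps\bigr), \]
where $H_k$ is the span of the columns other than $A_k$. I would fix $k$, condition on $W=\{A_j\}_{j\ne k}$, and write $\dist(A_k,H_k)=|\langle A_k,\theta_k\rangle|$ for the a.s.-defined unit normal $\theta_k$. Conditionally, $\langle A_k,\theta_k\rangle$ is one-dimensional log-concave with variance $v(W)=\langle\operatorname{Cov}(A_k\mid W)\theta_k,\theta_k\rangle=\sum_i\beta_i(W)(\theta_k)_i^2$; since a one-dimensional log-concave density with variance $v$ is bounded by $Cv^{-1/2}$, on the event $\{v(W)\ge c_2\}$ we get $\bp(\dist(A_k,H_k)\le\eps\mid W)\le C\eps/\sqrt{c_2}$. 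Hence $\bp(\dist(A_k,H_k)\le\eps)\le C'\eps+\bp(v(W)<c_2)$, and \refT{thm:unconditional} follows once
\[ \bp\bigl(v(W)<c_2\bigr)=\bp\Bigl(\textstyle\sum_i\beta_i(W)(\theta_k)_i^2<c_2\Bigr)\le e^{-cn} \]
for some absolute $c_2>0$; combining this with the compressible estimate for a fixed admissible choice of $\delta,\rho$ then gives \eqref{eq:unconditional}.

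The hard part will be this last lower bound on the conditional variance, and it is the only place where the lack of independence among the columns genuinely enters. Since $\theta_k$ is incompressible with overwhelming probability, there is a spread set $\sigma(\theta_k)$ of size $\ge cn$ on which $(\theta_k)_i^2\asymp 1/n$, so it suffices to show $\sum_{i\in\sigma(\theta_k)}\beta_i(W)\gtrsim n$ with probability $1-e^{-cn}$. As $\sum_i\E\beta_i(W)=\E|A_k|^2=n$, this says that the conditional second moments of the entries of one column, read along the random spread coordinates of the normal determined by the \emph{other} columns, cannot be anomalously small except with probability $\le e^{-cn}$. I expect to establish it by passing to a high-probability regularity event for the matrix of absolute values $(|A_{ij}|)$ — controlled via the Paouris-type deviation inequality for the isotropic log-concave vector $A\in\R^{n^2}$, which pins down $\|A\|_{HS}$ and all row and column norms up to constants with probability $1-e^{-cn}$ — combined with a pointwise lower bound on the conditional second moment of a log-concave coordinate and an argument decoupling the location of $\sigma(\theta_k)$ from the magnitudes in column $k$. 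This is where I would expect the bulk of the technical work to lie.
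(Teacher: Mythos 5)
Your overall architecture is the same as the paper's (compressible/incompressible split, invertibility-via-distance, conditioning on the other columns, diagonal conditional covariance from unconditionality, and a one-dimensional log-concave density bound), but the step you yourself flag as ``the hard part'' is precisely the paper's main new ingredient, and your sketch for it does not work. You need $\bp\bigl(\sum_i \beta_i(W)(\theta_k)_i^2 < c_2\bigr)\le e^{-cn}$, and you propose to get it from a Paouris-type regularity event for $(|A_{ij}|)$ plus ``a pointwise lower bound on the conditional second moment of a log-concave coordinate'' plus a decoupling of the spread set of $\theta_k$ from column $k$. Paouris only gives upper deviations for norms, and there is no pointwise lower bound on $\E[A_{ik}^2\mid W]$: conditioning preserves log-concavity but destroys isotropicity (this is exactly the obstruction highlighted in the introduction), so a priori all the $\beta_i(W)$ along the spread coordinates of the $W$-measurable normal could be tiny for typical $W$. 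The paper's mechanism is different and essential: \refPp{prop:small-ball} gives a lower bound on the order statistics of an isotropic log-concave vector (all but a $c_1$-fraction of coordinates exceed $r$ with probability $1-e^{-c_2n}$), proved by applying the Klartag--Lehec/Bizeul estimate (\refT{thm:Bizeul}) to coordinate projections (\refPp{prop:proj}) and union-bounding over index sets; this is applied to the \emph{marginal} law of $A_1$ and then transferred to the conditional law $Z=(A_1\mid B=Y)$ by a Markov/averaging argument over $B$ (the event $\cT_1$), which forces $\E[Z_i^2]\ge r^2/2$ for all but $n\delta^2\rho/3$ indices; intersecting with the spread set of the incompressible normal (\refPp{prop:spread}, via \refPp{prop:normal}) then gives a constant conditional variance. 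Without some averaging device of this kind, the decoupling you hope for is exactly what fails.

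A second, smaller gap is in your compressible case: you invoke $\|A\|\lesssim\sqrt n$ up to logarithmic factors, citing \cite{A10} and \cite{FLMV-largest}, but \cite{A10} requires independent columns (not available here), and the log factor is not harmless. With $\|A\|\le C\sqrt n\,\log^{a}n$ the net mesh, and hence the compressibility parameter $\rho$, must be taken of order $\log^{-a}n$, and this loss propagates through \refPp{prop:spread} (spread coordinates are only $\gtrsim\rho/\sqrt n$) into the conditional variance, yielding a final bound of the form $C\eps\log^{O(1)}n+e^{-cn}$ rather than \eqref{eq:unconditional}. The paper avoids the operator norm altogether: \refT{thm:comp} is proved with a random-rounding net (Proposition \ref{prop:3}) whose approximation error is $\frac{\eps}{\sqrt n}\hnorm{A}_{HS}$, and $\hnorm{A}_{HS}\le C\sqrt{Nn}$ holds with probability $1-e^{-c\sqrt{Nn}}$ by Paouris (\refT{thm:Paouris}); this is what allows constant $\delta,\rho$ and the clean bound $e^{-cn}$ on the compressible event, and also yields \refPp{prop:normal} for the normal vector.
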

Our last result is the tight lower tail estimate for the smallest singular value when $A$ has independent columns:
\begin{thm}\label{thm:indep-columns}
    Let $A \in \R^{N \times n}$ be an isotropic log-concave matrix with independent columns. Then
    \begin{equation}\label{eq:indep-columns}
    \bp\lrpar{\sigma_n(A) \le \eps(\sqrt{N+1} - \sqrt{n})} \le (C\eps)^{N-n+1} + e^{-cN},
    \end{equation}
    where $C,c > 0$ are absolute constants. 
\end{thm}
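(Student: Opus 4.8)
The plan is to separate two regimes according to the aspect ratio. Fix a small absolute constant $\lambda_0>0$. If $N\ge(1+\lambda_0)n$, then \refT{thm:tall} already does everything: when $\eps$ is small enough that $\eps(\sqrt{N+1}-\sqrt n)\le c_0\sqrt N$ it gives probability at most $e^{-c_0'N}$, while for larger $\eps$ the term $(C\eps)^{N-n+1}$ exceeds $1$. So the content lies entirely in the near-square range $n\le N<(1+\lambda_0)n$, where I write $m:=N-n+1$ (so $1\le m<\lambda_0 n+1$). Here I would split the unit sphere of $\R^n$ into compressible vectors --- those within Euclidean distance $\rho$ of a $\rho n$-sparse vector, $\rho$ a small absolute constant --- and incompressible vectors, and treat the two classes separately. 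The compressible part produces the additive term $e^{-cN}$: one combines the operator-norm bound $\|A\|\le K\sqrt N$, valid with probability at least $1-e^{-cN}$ by \cite{A10,FLMV-largest}, with a fixed-mesh net of cardinality $e^{o(n)}$ on the compressible set, and with the observation that for fixed $x_0$ the vector $Ax_0=\sum_k(x_0)_kA_k$ is, because the columns are independent and isotropic log-concave, itself an isotropic log-concave vector in $\R^N$, so that $\bp(|Ax_0|\le s)\le(Cs/\sqrt N)^N$; taking $s\asymp\sqrt N$ finishes this case.

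The incompressible part is the crux, and it is where the power $m$ appears. For a set $I\subseteq[n]$ of size $n_1:=\lceil(m-1)/\lambda_0\rceil$, put $V_I:=\colspan(A_j:j\notin I)$ and $B_I:=P_{V_I^\perp}A_I$ (the columns of $A$ in $I$, projected onto $V_I^\perp$). Almost surely $\dim V_I=n-n_1$, so $B_I$ is an $(N-n+n_1)\times n_1$ matrix; it has independent columns; conditionally on $(A_j)_{j\notin I}$ every column of $B_I$ is the orthogonal projection of an isotropic log-concave vector onto the fixed subspace $V_I^\perp$, hence is again isotropic log-concave; and $B_I$ is \emph{tall}, of aspect ratio at least $1+\lambda_0$ and of ``defect'' $(N-n+n_1)-n_1+1=m$. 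The trivial inequality $|Ax|^2\ge|P_{V_I^\perp}Ax|^2=|B_Ix_I|^2\ge\sigma_{\min}(B_I)^2|x_I|^2$ holds for every $I$. Now if $x$ is incompressible with $|Ax|\le t$, then on its spread set $\sigma(x)$ --- of size at least $\rho_0 n$ with $|x_i|\asymp 1/\sqrt n$ there --- one has $|x_I|\ge c_1\sqrt{n_1/n}$ for every $I\subseteq\sigma(x)$, whence $\sigma_{\min}(B_I)\le t\sqrt n/(c_1\sqrt{n_1})=:\tau$ for all such $I$. Since there are at least $\binom{\rho_0 n}{n_1}$ of them, the count $\#\{I:\sigma_{\min}(B_I)\le\tau\}$ is at least $\binom{\rho_0 n}{n_1}$ on the incompressible event, and Markov's inequality bounds the probability of that event by $\binom{n}{n_1}/\binom{\rho_0 n}{n_1}$ times $\max_I\bp(\sigma_{\min}(B_I)\le\tau)$, i.e.\ by $(C/\rho_0)^{n_1}\max_I\bp(\sigma_{\min}(B_I)\le\tau)$. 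Because $t=\eps(\sqrt{N+1}-\sqrt n)\asymp\eps m/\sqrt n$ and $n_1\asymp m$, the threshold $\tau\asymp\eps\sqrt m$ is comparable to the natural scale $\sqrt{N-n+n_1}$ of $B_I$, and $(C/\rho_0)^{n_1}=C^{\Theta(m)}$ is absorbed into the final constant; everything therefore reduces to a single-block estimate of the form
\[
\bp\bigl(\sigma_{\min}(B_I)\le\eps'\sqrt{N-n+n_1}\bigr)\le(C\eps')^{m},
\]
crucially \emph{with no additive error term}.

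The remaining task is thus to prove such a clean lower tail for the smallest singular value of tall isotropic log-concave matrices with independent columns: if $N'\ge(1+\lambda_0)n'$, then $\bp(\sigma_{\min}(B')\le\eps\sqrt{N'})\le(C\eps)^{N'-n'+1}$ for every $\eps>0$. I would prove this by induction on the number of columns $n'$. The base case $n'=1$ is the small-ball bound $\bp(|Y|\le\eps\sqrt{m'})\le(C\eps)^{m'}$ for an isotropic log-concave vector $Y$ in $\R^{m'}$, a consequence of the classical density bound $\|f_Y\|_\infty\le C^{m'}$ and the volume of a Euclidean ball. For the inductive step one again decomposes the sphere; because $B'$ is tall there are enough spare factors of $\eps$ for the compressible contribution to be $(C\eps)^{N'-n'+1}$ with no additive term, while the incompressible contribution is handled exactly as in the previous paragraph by passing to sub-blocks of $B'$, which remain tall, retain the defect $N'-n'+1$, and have strictly fewer columns. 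As the column count drops geometrically while the defect is preserved, this recursion has depth $O(\log n')$ and accumulates only a factor $(n')^{O(1)}\le(C'/C)^{N'-n'+1}$, which is harmless because the defect is comparable to $n'$ in the tall regime. Finally, the residual cases in which $m$ is below an absolute constant $m_0$ --- in particular the square case $N=n$ --- are treated directly: one applies the classical invertibility-via-distance argument to $\dist(A_k,\colspan(A_j:j\ne k))$, which is the norm of the projection of an isotropic log-concave vector onto an $m$-dimensional subspace and hence again has an additive-error-free small ball, and concludes by Markov over $k$ as above.

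The step I expect to be the main obstacle is precisely the bookkeeping in this reduction ``tall $\Rightarrow$ sub-blocks $\Rightarrow$ tall'': the several parameters --- the constant $\lambda_0$ separating the regimes, the spread constants $\rho_0,c_1$, the block size $n_1$, and the threshold $m_0$ --- must be chosen compatibly, and one must verify carefully that the compressible analysis for tall matrices and the Markov-type counting steps are genuinely free of additive errors, since any such error, being only exponentially small in $m$ rather than in $N$, would propagate back up the recursion and swamp the clean $(C\eps)^{N-n+1}$ bound.
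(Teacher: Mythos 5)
Your skeleton overlaps substantially with the paper's: split by aspect ratio and dispose of the tall regime via \refT{thm:tall}; in the near-square regime handle compressible vectors with an $e^{-cN}$ term and reduce the incompressible part, using independence of the columns, to the smallest singular value of a block $P_{V_I^\perp}A_I$ whose columns are again independent isotropic log-concave (the paper does this through \refL{lem:invert-via-dist} with $d=N-n+1$ and $W=P_{H_{J^c}^\perp}A_J$, rather than your Markov count over all blocks $I$, but that difference is cosmetic). The genuine gap is the ingredient you yourself flag as the ``main obstacle'' and then leave unresolved: your reduction needs a lower tail for tall log-concave matrices of the form $\bp(\sigma_{\min}(B')\le\eps\sqrt{N'})\le(C\eps)^{N'-n'+1}$ \emph{with no additive error}, and your proposed mechanism (``enough spare factors of $\eps$,'' induction on the number of columns) does not produce it. The additive error in every net argument does not come from a shortage of $\eps$-factors in the union bound; it comes from the event that $\hnorm{B'}_{HS}$ (or the operator norm) is atypically large, which is needed to transfer the pointwise small-ball bound from net points to the whole compressible set or spread set. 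That event carries no power of $\eps$, and its probability is only exponentially small in the \emph{local} dimension --- of order $e^{-\Theta(m)}$ at the bottom of your recursion, and even of constant order in your residual case $2\le m\le m_0$, where $\hnorm{W}_{HS}$ exceeds a constant multiple of its mean with probability $e^{-cm}$, not $o(1)$. So each level of your induction, and the bounded-$m$ endgame (except $m=1$, where no net is needed), reintroduces exactly the additive term you must avoid. The paper's proof resolves precisely this point with the decoupling lemma of \cite{L21} (restated as \refL{lem:decouple}): one decomposes over dyadic levels of $\hnorm{W}_{HS}$ and, on each level, multiplies the Paouris tail $e^{-c2^id}$ by the individual small-ball probability $(C2^i\eps)^{2d-1}$ instead of adding it, which is what makes the incompressible contribution come out as $(C\eps)^{d}$ with the only additive term being $e^{-cN}$ (exponential in $N$, not in $d$). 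Without this, or an equivalent device, your clean tall estimate is unproven and the whole reduction collapses, exactly as you predict.

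A secondary issue: in the compressible step you invoke $\|A\|\le K\sqrt N$ with probability $1-e^{-cN}$ citing \cite{A10} and \cite{FLMV-largest}. Neither reference gives this: \cite{A10} assumes i.i.d.\ columns and yields a weaker failure probability, and the paper states explicitly that \cite{FLMV-largest} carries a logarithmic loss, which is why the authors work with $\hnorm{A}_{HS}$ via Paouris and the random-rounding net of \refL{lem:net-full} instead. This is patchable (Paouris on $\hnorm{A}_{HS}$ gives $e^{-c\sqrt{Nn}}\le e^{-cN}$ in the near-square regime, as in \refT{thm:comp}), but as written the claim is unsupported.
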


We remark that under the additional assumption of independent columns, Theorem \ref{thm:indep-columns} generalizes both Theorems \ref{thm:tall} and \ref{thm:unconditional}, and that this small ball behavior for the smallest singular value is known to be exhibited by matrices with independent entries, under certain assumptions on the moments of the entries. Namely, in their seminal work, Rudelson and Vershynin \cite{RV08} showed this behavior for sub-Gaussian i.i.d. random matrices with $N=n,$ and later for $N\geq n$ in \cite{RudVer-general}. In the case of $N=n$, this was extended to i.i.d. matrices with uniformly anti-concentrated entries with bounded second moments by Rebrova, Tikhomirov \cite{RebTikh}, the i.i.d. assumption was relaxed by Livshyts \cite{L21}, and this result was obtained assuming independent uniformly anti-concentrated entries with bounded expected Hilbert-Schmidt norm by Livshyts, Tikhomirov, Vershynin \cite{LTV}. The general $N\geq n$ case was studied by Livshyts \cite{L21} and later by Dabagia, Fernandez \cite{Fern-Dab}, utilizing the work of Fernandez \cite{Fern-distance}.

On a high level, our proof uses the compressible-incompressible decomposition of the sphere, which was pioneered by Rudelson and Vershynin and subsequently used in hundreds of follow-up works. At the core of our argument, we use the recent positive resolution of the celebrated Bourgain's slicing problem \cite{bour1, bour} by Klartag and Lehec \cite{KL24} (see also Bizeul \cite{B25} for an alternative argument). We use it for the point-wise small ball estimate for the length of a log-concave random vector, coupled with a net argument. In order to avoid using the estimate on the norm of the log-concave matrix, which we have with a logarithmic error (as already mentioned, we prove it in a separate paper \cite{FLMV-largest}), we use a net argument based on random rounding which was developed in \cite{KlLi}, \cite{L21} (following ideas from \cite{KA}) and later used also e.g. in \cite{Jain-Silwal, Fern-distance, Fern-Dab, LTV}. 

Significant challenges arise in the implementation of this scheme. For instance, in the incompressible case, we need small ball estimates not for the isotropic log-concave vector (which is our matrix), but for a sub-vector (a column of our matrix) conditioned on the realization of the random normal (orthogonal to the span of the other columns of the matrix). Conditioning preserves log-concavity but loses isotropicity, and therefore we cannot use the slicing estimate \cite{B25, KL24} ready-made. We manage to address this challenge in the tall case, the square unconditional case, as well as in the independent columns case. However, in general, this issue appears to be significant, and not just a minor technicality, as can be seen in some examples. In view of that, we are not convinced that the answer should remain the same in the general log-concave isotropic case; this question remains an interesting open problem.

In Section 2, we present some preliminaries. In Section 3, we outline the proof in the tall case. In Section 4, we outline the proof in the square unconditional case. In Section 5, we discuss the independent columns case for the general aspect ratio.  

\textbf{Acknowledgment.} The authors are grateful to Santosh Vempala and Konstantin Tikhomirov for helpful discussions and interest. The first named author is grateful to ARCS fellowship. The second named author is supported by NSF-BSF DMS grant 2247834. The third named author is supported by NSF DMS grant 2402038.

\section{Preliminaries}

We work in $\R^n$ equipped with the Euclidean norm which we denote by $|\cdot|$. The scalar product is denoted by $\langle \cdot, \cdot\rangle.$ The Hilbert-Schmidt norm of a matrix $A$ is denoted as $\hnorm{A}_{HS}$ and is defined as 
$\hnorm{A}_{HS} := \sqrt{\sum_{i,j} a_{ij}^2}$. 
Throughout this paper, unless stated otherwise, the letters $c,C,c_1
, C_1$, etc.
denote positive absolute constants, that are not necessarily the same in
different appearances. Finally, $C$ and $c$ and may be used multiple times in the same context, such as in a chain of inequalities, with different values at every appearance, in order to combine or simplify terms in a complicated expression. 
\par 
Recall the notions of sparse, compressible and incompressible vectors originating from \cite{RV08}:
\begin{equation}\label{eq:comp-decomp}
\begin{split}
    \sparse(\delta n) &:= \lrset{x \in \R^n: \supp(x) \le \delta n}, \\
    \comp(\delta,\rho,n) &:= \left\{ x \in \bS^{n-1}: \dist(x, \sparse(\delta,n)) \le \rho\right\}, \\
    \incomp(\delta,\rho,n) &:= \bS^{n-1}\setminus \comp(\delta,\rho,n). 
\end{split}
\end{equation}
We will take the convention of dropping $n$ from the above notation when the ambient dimension of the sets are clear from the context. Given a matrix $A$ and a subset of column indices $J$ we write $A_J$ to denote the submatrix consisting of all columns of $A$ indexed by $J$ and write $H_J$ to denote the column span of $A_J$ (the $A$ that defines $H$ will always be clear from the context). If $J = \{j\}$ we may write $A_j$ instead of $A_J$. Give a matrix $A$ we use ${\colspan}(A)$ to denote the column span of $A$.
\par We now state a number of results that we will use throughout the paper. The first is the small ball estimate for isotropic log-concave vectors, a breakthrough result due to Klartag and Lehec \cite{KL24}, and Bizeul \cite{B25} (for concrete reference, see Corollary 3 in \cite{B25}).

\begin{thm}[Klartag, Lehec; Bizeul]\label{thm:Bizeul}
    Let $X$ be an isotropic log-concave vector in $\R^n$. Then for any $\eps > 0$ and any $y \in \R^n$, 
    \begin{equation}\label{eq:Bizeul}
    \bp(\norm{X-y} \le \eps\sqrt{n}) \le (C\eps)^n.
    \end{equation}
\end{thm}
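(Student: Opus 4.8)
The plan is to derive the estimate from a single deep ingredient — a pointwise bound on the density of $X$ — via the elementary fact that a small-ball probability is at most the supremum of the density times the volume of the ball. First I would record that $X$ has a bounded density $f_X$ with respect to Lebesgue measure on all of $\R^n$: full-dimensionality of the support follows because isotropicity forces $\mathrm{Cov}(X) = I_n$ to be nonsingular, and boundedness is a standard property of integrable log-concave functions. Then, for every $y \in \R^n$,
\begin{equation*}
    \bp\bigl(\norm{X-y} \le \eps\sqrt{n}\bigr) \;=\; \int_{\{\,x \,:\, |x-y|\le \eps\sqrt{n}\,\}} f_X(x)\,dx \;\le\; \norm{f_X}_{\infty}\cdot\mathrm{vol}\bigl(\{\,x \in \R^n : |x| \le \eps\sqrt{n}\,\}\bigr).
\end{equation*}

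Next I would bound the two factors separately. The volume of the Euclidean ball of radius $\eps\sqrt{n}$ is $\pi^{n/2}(\eps\sqrt{n})^{n}/\Gamma(n/2+1)$, and Stirling's formula gives $\Gamma(n/2+1)\ge (cn)^{n/2}$ for an absolute constant $c>0$, so this volume is at most $(C_0\eps)^{n}$ with $C_0$ absolute. For the density factor, observe that $\norm{f_X}_\infty^{1/n}$ is, by definition, precisely the isotropic constant $L_X$ of the log-concave vector $X$ (the determinant-of-covariance normalization being trivial since $X$ is isotropic). Here the one substantial input enters: the recent positive resolution of Bourgain's slicing conjecture by Klartag and Lehec \cite{KL24} (see also Bizeul \cite{B25}), together with the classical reduction of the hyperplane conjecture to the uniform boundedness of the isotropic constant of log-concave probability measures, gives $L_X \le C_1$ for an absolute $C_1$, hence $\norm{f_X}_\infty \le C_1^{n}$. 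Multiplying the two bounds yields $\bp(\norm{X-y}\le \eps\sqrt{n}) \le (C_0 C_1\eps)^{n}$, which is the assertion with $C := C_0 C_1$. (For $\eps \ge 1/C$ the bound is vacuous, the right-hand side exceeding $1$, so only small $\eps$ carries content, and there the argument applies as is.)

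The main — in fact the only — obstacle is the density estimate $\norm{f_X}_\infty \le C_1^{n}$: this is literally equivalent to the slicing problem, and it is precisely the hard theorem being imported; everything else is a one-line volume computation or a soft fact about log-concave densities. I would stress that no thin-shell or concentration input is needed here, since we only seek an \emph{upper} bound on the small-ball probability: the pointwise density bound alone is enough, and one needs no information about where the mass of $X$ sits.
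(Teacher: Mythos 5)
Your proposal is correct, and it matches how this result is obtained in the source the paper cites: the paper does not prove Theorem~\ref{thm:Bizeul} itself but imports it (Corollary 3 of Bizeul \cite{B25}, building on Klartag--Lehec \cite{KL24}), and that corollary is derived exactly as you do --- the slicing theorem gives $\hnorm{f_X}_\infty^{1/n}=L_X\le C_1$ for the isotropic log-concave density, and then the small-ball probability is bounded by $\hnorm{f_X}_\infty\cdot{\rm vol}(\eps\sqrt{n}\,B_2^n)\le (C_0C_1\eps)^n$ since ${\rm vol}(\sqrt{n}B_2^n)^{1/n}$ is bounded by an absolute constant. The only cosmetic caveat is the convention in defining $L_X$ (some authors use $f_X(0)^{1/n}$ rather than $\hnorm{f_X}_\infty^{1/n}$), but by Fradelizi's inequality these differ by at most a factor $e$, so your bound is unaffected.
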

Next result is a seminal large deviation inequality for the norm of an isotropic log-concave vector, due to Paouris \cite{P06}. 
\begin{thm}[Paouris]\label{thm:Paouris}
If $X$ is an isotropic log-concave random vector in $\R^n$, then for every $t \ge 1$, 
\begin{equation}\label{eq:Bizeul}
\bp(|X| \ge Ct\sqrt{n}) \le \exp(-t\sqrt{n}),
\end{equation}\label{eq:bizeul}
where $C > 0$ is an absolute constant.
\end{thm}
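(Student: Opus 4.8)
The statement is Paouris' large deviation inequality \cite{P06}; a genuine proof is substantial, and the plan is to follow the $L_q$-centroid body route (see also the streamlined argument of Adamczak, Lata\l{}a, Litvak, Pajor and Tomczak-Jaegermann). The first step, which is routine, is to pass from the tail to the strong moments $I_q(X):=\bigpar{\E|X|^q}^{1/q}$. By Markov's inequality $\bp\bigpar{|X|\ge e\,I_q(X)}\le e^{-q}$, so it suffices to show
\[
I_q(X)\le C\bigpar{\sqrt n+q}\qquad\text{for every }q\ge 1 ,
\]
since feeding $q=t\sqrt n$ with $t\ge 1$ into this (so that $\sqrt n+q\le 2t\sqrt n$) gives $\bp\bigpar{|X|\ge 2eCt\sqrt n}\le e^{-t\sqrt n}$, which is the assertion. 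The content is thus entirely in the moment bound: $I_q(X)$ must stay of order $\sqrt n$ for $q$ as large as order $\sqrt n$, far better than the trivial estimate $I_q(X)\le\sqrt n\,\sigma_q(X)$ obtained from $|X|^2=\sum_iX_i^2$, in which the worst-direction moment $\sigma_q(X):=\sup_{|\theta|=1}\bigpar{\E|\langle X,\theta\rangle|^q}^{1/q}$ already grows linearly in $q$ by Borell's lemma.

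To control $I_q(X)$ I would introduce the $L_q$-centroid bodies $Z_q=Z_q(X)$, defined by $h_{Z_q}(\theta)=\bigpar{\E|\langle X,\theta\rangle|^q}^{1/q}$, and collect the structural facts: for isotropic $X$ one has $Z_2=B_2^n$; log-concavity, via Borell's lemma, gives $Z_p\subseteq Z_q\subseteq C\tfrac{q}{p}Z_p$ for $q\ge p\ge 1$, so the circumradius satisfies $R(Z_q)\le Cq$; and standard estimates give the volume lower bound $|Z_q|^{1/n}\ge c\sqrt{q/n}$ for $2\le q\le n$, whence Urysohn's inequality yields the mean width bound $w(Z_q)\ge c\sqrt q$. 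The crux is a transference inequality — proved by polar integration against the law of $X$, together with reverse H\"older inequalities for log-concave densities and estimates on negative moments of linear functionals — which shows that $I_q(X)$ is governed by averages over $S^{n-1}$ of the gauge $\|\cdot\|_{Z_q^\circ}=h_{Z_q}(\cdot)$, and that the sharp concentration of this gauge on the sphere collapses these averages to their value at $q=2$, namely $I_2(X)=\sqrt n$, as long as $q\lesssim k_*(Z_q)$, where $k_*(Z_q)\asymp n\bigpar{w(Z_q)/R(Z_q)}^2$ is the Dvoretzky dimension of $Z_q$. It then remains to check that this regime reaches the correct scale: from $R(Z_q)\le Cq$ and $w(Z_q)\ge c\sqrt q$ one has $k_*(Z_q)\ge c\,n/q$, so $q\le k_*(Z_q)$ holds up to $q\asymp\sqrt n$, giving $I_q(X)\le C\sqrt n$ on that range. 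For $q$ beyond it the optimal bound is no longer needed: the inclusion $Z_q\subseteq C\bigpar{q/\sqrt n}Z_{\sqrt n}$ propagates through the same transference estimate to give $I_q(X)\le Cq$, and together the two ranges give $I_q(X)\le C(\sqrt n+q)$ for all $q\ge 1$, completing the argument via the Markov reduction above.

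The main obstacle is precisely this transference-plus-concentration core: establishing that $I_q(X)$ is controlled by sphere-averages of $\|\cdot\|_{Z_q^\circ}$, and lower-bounding the Dvoretzky dimension $k_*(Z_q)$ by $q$ all the way to the critical scale $q\asymp\sqrt n$. This is the genuine substance of Paouris' theorem — it is not a formal consequence of the crude linear bound on $\sigma_q(X)$ — and it is where log-concavity is used in an essential rather than cosmetic way, through Borell's lemma, the volume estimates for the $Z_q$, the reverse H\"older inequalities, and the resulting control of the geometry of the centroid bodies. Since the result needed here is exactly Paouris' published theorem, in the present paper one simply invokes \cite{P06}; the sketch above indicates the route one would take to reprove it from scratch.
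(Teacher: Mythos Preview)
The paper does not give its own proof of this statement: \refT{thm:Paouris} is stated in the Preliminaries section as a known result and attributed to Paouris \cite{P06}, with no argument supplied. Your proposal anticipates exactly this --- you note in your final paragraph that ``in the present paper one simply invokes \cite{P06}'' --- and the sketch you give is a faithful outline of the original $L_q$-centroid body approach: the reduction to the moment bound $I_q(X)\le C(\sqrt n+q)$ via Markov, the Dvoretzky-dimension mechanism $k_*(Z_q)\gtrsim n/q$ that keeps $I_q(X)\asymp\sqrt n$ up to $q\asymp\sqrt n$, and the propagation to larger $q$ via the inclusion $Z_q\subseteq C(q/p)Z_p$. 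There is nothing to compare against in the paper itself, and your sketch is consistent with the standard proof in the literature.
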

Finally, we recall the following result about lattice-like $\eps$-nets, see Lemma 5.1 in Klartag, Livshyts \cite{KlLi} (which was later used as Lemma 3.5 in Livshyts \cite{L21} for a more involved construction, but in this paper only require this result).
\begin{lem}[a net via random rounding]
\label{lem:livshyts}\label{Livshyts_net_sv}\label{lem:net-full} Let $\varepsilon\in(0,0.05)$. Then
there exists a net $\mathcal{N}\subset\frac{3}{2}B_{2}^{n}\backslash\frac{1}{2}B_{2}^{n}$
with $\#\mathcal{N}\leq\left(\frac{C}{\varepsilon}\right)^{n-1}$
such that for any (deterministic) matrix $A$ and every $x\in S^{n-1}$,
there exists $y\in\mathcal{N}$ such that
\begin{equation}\label{eq:livshyts}
\norm{A(y-x)}\leq\frac{\varepsilon}{\sqrt{n}}\hnorm{A}_{HS},
\end{equation}
 where $C > 0$ is an absolute constant.
\end{lem}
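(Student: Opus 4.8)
The plan is to construct $\mathcal{N}$ as a rescaled copy of the integer lattice intersected with a thin annular shell around $S^{n-1}$, and to verify the covering property \eqref{eq:livshyts} by randomized rounding rather than by building a deterministic net. Set $\delta:=\eps/\sqrt n$ and take
\[
\mathcal{N}:=\bigl(\delta\ZZ^n\bigr)\cap\bigl\{y\in\RR^n:\ 1-\eps\le\norm{y}\le 1+\eps\bigr\}.
\]
Because $\eps\in(0,0.05)$ we have $\tfrac12<1-\eps$ and $1+\eps<\tfrac32$, so this shell lies inside $\tfrac32 B_2^n\setminus\tfrac12 B_2^n$ and the placement constraint on $\mathcal N$ holds automatically; $\mathcal N$ is nonempty since the coordinatewise nearest-lattice rounding of any point of $S^{n-1}$ belongs to it. (We may assume $n\ge 2$; for $n=1$ take $\mathcal N=\{-1,1\}$, with error $0$.)

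\textbf{Covering via randomized rounding.} Fix a deterministic matrix $A$ with columns $A_1,\dots,A_n$, and fix $x\in S^{n-1}$. For each $i$ write $x_i=(k_i+\theta_i)\delta$ with $k_i\in\ZZ$, $\theta_i\in[0,1)$, and let $Y_i$ be the unbiased random rounding of $x_i$ onto $\delta\ZZ$: independently over $i$, put $Y_i=(k_i+1)\delta$ with probability $\theta_i$ and $Y_i=k_i\delta$ otherwise. Then $\E Y_i=x_i$, $\abs{Y_i-x_i}\le\delta$, and $\mathrm{Var}(Y_i)=\theta_i(1-\theta_i)\delta^2\le\delta^2/4$; set $Y:=(Y_1,\dots,Y_n)\in\delta\ZZ^n$. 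First, $\norm{Y-x}^2=\sum_i(Y_i-x_i)^2\le n\delta^2=\eps^2$, so every realization of $Y$ lies within distance $\eps$ of $S^{n-1}$, hence in $\mathcal N$. Second, since $A(Y-x)=\sum_i(Y_i-x_i)A_i$ and the $Y_i-x_i$ are independent and centered, the off-diagonal terms vanish and
\[
\E\,\norm{A(Y-x)}^2=\sum_{i=1}^n\mathrm{Var}(Y_i)\,\norm{A_i}^2\le\frac{\delta^2}{4}\sum_{i=1}^n\norm{A_i}^2=\frac{\eps^2}{4n}\hnorm{A}_{HS}^2.
\]
Therefore some realization $y\in\mathcal N$ of $Y$ satisfies $\norm{A(y-x)}\le\frac{\eps}{2\sqrt n}\hnorm{A}_{HS}$, which gives \eqref{eq:livshyts}. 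It is worth emphasizing that using randomized (rather than nearest-point) rounding is exactly what yields $\hnorm{A}_{HS}$ on the right: a deterministic perturbation of $x$ of Euclidean length $\sim\eps$ could only be controlled by $\eps$ times the operator norm of $A$, which is far too weak when the mass of $A$ is concentrated on few columns.

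\textbf{Cardinality, and the main point.} The one delicate step is verifying $\#\mathcal N\le(C/\eps)^{n-1}$ with the exponent $n-1$ (not $n$) and $C$ absolute. To this end I would count by volume: the cubes $p+[-\tfrac\delta2,\tfrac\delta2)^n$, $p\in\mathcal N$, are pairwise disjoint, each of volume $\delta^n$, and each is contained in the shell of all points within Euclidean distance $\tfrac{3\eps}{2}$ of $S^{n-1}$, so
\[
\#\mathcal N\cdot\delta^n\ \le\ \mathrm{vol}\bigl\{y:\ 1-\tfrac{3\eps}{2}\le\norm{y}\le 1+\tfrac{3\eps}{2}\bigr\}\ \le\ 3\eps\,n\,\bigl(1+\tfrac{3\eps}{2}\bigr)^{n-1}\mathrm{vol}(B_2^n).
\]
Substituting $\delta^n=(\eps/\sqrt n)^n$ together with the standard bound $\mathrm{vol}(B_2^n)\le(C'/\sqrt n)^n$, the $n$-th powers collapse and one is left with $3C'n\bigl((1+\tfrac{3\eps}{2})C'/\eps\bigr)^{n-1}$; since $1+\tfrac{3\eps}{2}<2$ and the prefactor $3C'n$ is only polynomial in $n$, it is absorbed into $(C/\eps)^{n-1}$ for an absolute $C$ (treating the finitely many small values of $n$ separately). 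The reason the exponent comes out as $n-1$ is that the randomized rounding always lands in a shell of width $\Theta(\eps)$ about $S^{n-1}$, so only lattice points in that shell are needed; using the full annulus $\tfrac32 B_2^n\setminus\tfrac12 B_2^n$ instead would only give $(C/\eps)^{n}$. Finally, one might hope instead to round only $n-1$ coordinates and recover the last one from $\sum y_i^2=1$; this is tempting but does not work, because the recovered coordinate absorbs the accumulated rounding error of the other $n-1$ coordinates, which is too large to control against $\hnorm{A}_{HS}$ when $A$ is nearly rank one.
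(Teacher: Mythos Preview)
Your proof is correct and follows precisely the random-rounding strategy of Klartag--Livshyts \cite{KlLi}, which the paper cites for this lemma without reproducing the argument. The same mechanism (unbiased coordinatewise rounding onto $\frac{\eps}{\sqrt n}\ZZ^n$, then $\E\norm{A(y-\eta_y)}^2\le\frac{\eps^2}{n}\hnorm{A}_{HS}^2$ and the probabilistic method) is spelled out in the paper in the proof of Proposition~\ref{prop:3}, so your argument is essentially the one the paper has in mind; your shell-volume computation for the cardinality exponent $n-1$ is the standard complement to that rounding step.
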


\section{Lower tail estimates in the tall case}

In order to prove \refT{thm:tall}, we first prove a lower tail estimate on $\inf_{x \in \comp(\delta,\rho)}|Ax|$.
\begin{thm}\label{thm:comp}
    Let $\delta \in (0,1)$. Then there exists $c,\rho \in (0,1)$ depending on $\delta$ such that the following holds: Let $A \in \R^{N \times n}$ be an isotropic log-concave matrix. Then    
    \begin{equation}\label{eq:comp}
    \bp\lrpar{\inf_{x \in \comp(\delta,\rho)} \norm{Ax} \le c\sqrt{N}} \le e^{-c\sqrt{Nn}}.
    \end{equation}
\end{thm}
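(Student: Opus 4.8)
The plan is to cover $\comp(\delta,\rho)$ by a net whose cardinality is controlled in terms of $\delta$, apply the Klartag–Lehec small-ball estimate (\refT{thm:Bizeul}) pointwise to each net point, and finally union bound. The subtlety is that the columns of $A$ are dependent, so $Ax$ for a fixed $x$ is itself an isotropic log-concave vector in $\R^N$ (a linear image of the isotropic log-concave vector in $\R^{Nn}$, since $\|x\|=1$), which is exactly the situation \refT{thm:Bizeul} handles; this is the one place where joint log-concavity, rather than column-independence, is used so cleanly. First, I would observe that a sparse unit vector supported on a set $J$ with $|J| \le \delta n$ lives in a unit sphere of a coordinate subspace of dimension $\le \delta n$, and such a sphere admits a $\rho$-net of size $(3/\rho)^{\delta n}$; taking the union over the $\binom{n}{\delta n} \le (e/\delta)^{\delta n}$ choices of $J$ gives a $\rho$-net $\cN$ of $\sparse(\delta n) \cap \bS^{n-1}$ with $\#\cN \le \left(\tfrac{C}{\delta \rho}\right)^{\delta n}$.

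Next, for the approximation step: every $x \in \comp(\delta,\rho)$ is within $\rho$ (in Euclidean distance) of some sparse unit vector, hence within $2\rho$ of some $y \in \cN$; thus $\|Ax\| \ge \|Ay\| - 2\rho\,\sigma_1(A)$. To control $\sigma_1(A)$ I would use Paouris' inequality (\refT{thm:Paouris}): viewing $A$ as an isotropic log-concave vector in $\R^{Nn}$, we get $\|A\|_{HS} \le C\sqrt{Nn}$ except with probability $e^{-\sqrt{Nn}/c}$, and since $\sigma_1(A) \le \|A\|_{HS}$ this gives $\sigma_1(A) \le C\sqrt{Nn}$ off a set of exponentially small probability. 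On that good event, $\|Ax\| \ge \inf_{y \in \cN}\|Ay\| - 2C\rho\sqrt{Nn}$.

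Then the union bound. For each fixed $y \in \cN$, the vector $Ay$ is isotropic log-concave in $\R^N$, so $\bp(\|Ay\| \le t\sqrt{N}) \le (Ct)^N$ by \refT{thm:Bizeul}. Hence
\begin{equation*}
\bp\lrpar{\inf_{y \in \cN}\|Ay\| \le t\sqrt{N}} \le \#\cN \cdot (Ct)^N \le \lrpar{\tfrac{C}{\delta\rho}}^{\delta n}(Ct)^N.
\end{equation*}
Since $N \ge n$, choosing $t$ a small enough constant $c_0 = c_0(\delta,\rho)$ makes $(C c_0)^N$ beat the factor $(C/(\delta\rho))^{\delta n}$, leaving something like $e^{-c' N}$, and in particular $\le e^{-c\sqrt{Nn}}$. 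Finally I would pick $\rho = \rho(\delta)$ small enough that $2C\rho\sqrt{Nn} \le \tfrac{c_0}{2}\sqrt{N}$ — wait, that cannot hold since $\sqrt{Nn} \gg \sqrt{N}$; instead one must be more careful: run the small-ball bound at scale $t\sqrt{N}$ with $t$ a constant, and absorb the $2C\rho\sqrt{Nn}$ error by noting it is the error term that forces the final bound to be $e^{-c\sqrt{Nn}}$ rather than $e^{-cN}$. Concretely, split: if $\|A\|_{HS} \le K$ for $K$ to be chosen, then $\|Ax\| \ge \inf_\cN\|Ay\| - 2\rho K$; choose $K = \tfrac{c_0}{4\rho}\sqrt{N}$ so the error is $\tfrac{c_0}{2}\sqrt{N}$, and then by Paouris $\bp(\|A\|_{HS} \ge K) = \bp(\|A\|_{HS} \ge \tfrac{c_0}{4\rho}\sqrt{N}) \le e^{-\sqrt{Nn}/c}$ provided $\tfrac{c_0}{4\rho} \ge C$, i.e. $\rho \le c_0/(4C)$, which holds for $\rho$ small in terms of $\delta$. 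This yields the claimed bound $e^{-c\sqrt{Nn}}$.

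**Main obstacle.** The genuinely delicate point is the interplay of the three scales: the net size is exponential in $\delta n$, the small-ball gain is exponential in $N$ (good, since $N \ge n$ and $\delta$ is small), but the Paouris error term only gives $e^{-\sqrt{Nn}/c}$, and one must route the approximation error $\rho\,\sigma_1(A) \approx \rho\sqrt{Nn}$ through precisely this Paouris event rather than trying to make it smaller than $\sqrt{N}$. Getting the quantifiers right — first fix $\delta$, then choose $\rho$ small enough for the net and the Paouris truncation, then choose $c_0$ small enough for the union bound to close — is the part that needs care; everything else is routine. I would expect no essential difficulty beyond bookkeeping, since dependence of the columns is a non-issue here: a unit-norm linear combination of the coordinates of an isotropic log-concave vector is again isotropic log-concave, so \refT{thm:Bizeul} applies verbatim.
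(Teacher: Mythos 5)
Your overall skeleton (net over the compressible set, pointwise small-ball via \refT{thm:Bizeul} applied to $Ay$, Paouris for $\hnorm{A}_{HS}$, union bound) matches the paper, but the specific net you use creates a gap that your final ``fix'' does not close. With a Euclidean $\rho$-net on the sparse sphere, the approximation error is $\norm{A(x-y)} \le 2\rho\,\sigma_1(A)$, and the only bound available on $\sigma_1(A)$ in this setting is $\sigma_1(A)\le \hnorm{A}_{HS}\approx\sqrt{Nn}$ (the paper has no usable operator-norm bound; avoiding one is the stated reason for the random-rounding net). So the error is of order $\rho\sqrt{n}\cdot\sqrt{N}$, a factor $\sqrt{n}$ larger than the target scale $c\sqrt{N}$, and $\rho$ cannot be taken $\lesssim 1/\sqrt{n}$ because it is the compressibility radius, a constant depending only on $\delta$. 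Your proposed repair --- conditioning on $\hnorm{A}_{HS}\le K$ with $K=\tfrac{c_0}{4\rho}\sqrt{N}$ and invoking Paouris ``provided $c_0/(4\rho)\ge C$'' --- is where the argument breaks: Paouris (\refT{thm:Paouris}) only controls deviations above $C\sqrt{Nn}$, since $\E\hnorm{A}_{HS}^2=Nn$; at threshold $\mathrm{const}\cdot\sqrt{N}$ the event $\{\hnorm{A}_{HS}\ge K\}$ has probability close to $1$ (indeed $\bp(\hnorm{A}_{HS}\le \mathrm{const}\cdot\sqrt{N})$ is itself exponentially small by \refT{thm:Bizeul}), so the ``good'' event you condition on is exponentially rare and the bound collapses. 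Refining the mesh to $\sim \eps/\sqrt{n}$ instead would inflate the net cardinality by a factor $n^{\delta n/2}$, which the gain $(C\eps)^N$ with constant $\eps$ cannot beat when $N\asymp n$, and taking $\eps$ polynomially small in $n$ would destroy the constant $c$ in $c\sqrt{N}$.

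The paper's proof resolves exactly this tension with \refPp{prop:3}: a sparse lattice-like net built by random rounding, of cardinality $(C/(\delta^{3/2}\eps))^m$ with $m\le (1+\delta)n/2$ after choosing $\rho=(1-\delta)\eps/5$, whose approximation error is $\tfrac{2\eps}{\sqrt{n}}\hnorm{A}_{HS}$ rather than $\eps\,\sigma_1(A)$. On the Paouris event $\hnorm{A}_{HS}\le C\sqrt{Nn}$ this error is $\lesssim \eps\sqrt{N}$, i.e.\ at the same scale as the small-ball estimate, while the net size stays exponential in $n$ with a base depending only on $\delta$ and $\eps$; the union bound then closes with $\eps=\eps(\delta)$ a constant, giving $e^{-cN}+e^{-c\sqrt{Nn}}\le e^{-c\sqrt{Nn}}$. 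This HS-norm-scaled net is the essential ingredient missing from your argument; without it (or an operator norm bound of order $\sqrt{N}$, which is not available here), the compressible-vector estimate does not go through.
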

As a corollary of Theorem \ref{thm:comp}, one can deduce \refT{thm:tall} by reducing to the compressible case.
\begin{proof}[Proof of \refT{thm:tall}]
     We will address the $N/n \in (1+\lambda, 2)$ case as a remark after the proof of \refT{thm:comp}.
     Suppose then that $N/n \ge 2$. Take $\rho,c$ from Theorem \ref{thm:comp} with $\delta = 1/2$. We define $\tilde{A} \in \R^{N \times (n \floor{N/n})}$ to be a random matrix obtained by concatenating together $s := \floor{N/n}$ independent copies of $A$ along their rows. From the definition of $\tilde{A}$ it follows that $\tilde{A}_{\{1,\cdots,n\}}$ has the same law as $A$, and $\tilde{A}$ is also an isotropic log-concave matrix. Therefore, since
    $$\bS^{n-1} \times \{0\}^{n\cdot(s - 1)} \subset \comp(1/2,\rho,sn),$$ we have that 
    \[
    \bp\lrpar{\inf_{x \in \bS^{n-1}} \norm{Ax} \le c\sqrt{N}} \le \bp\lrpar{\inf_{y \in \comp(1/2,\rho,sn)} \lVert \tilde{A}x\rVert_2 \le c\sqrt{N}} \le e^{-cN},
    \]
with the last inequality following from \eqref{eq:comp} of Theorem \ref{thm:comp} and the fact that $(N/n)/\floor{N/n} \le 3/2$ whenever $N/n \ge 2$. 
\end{proof}
\begin{rem}
    We note that the reduction to the compressible case is necessary for the tight probability estimate. If one instead tried to work directly with $\inf_{x \in \bS^{n-1}} \norm{Ax}$ and apply the net argument, one would obtain a bound of the form
    \[
    \bp\lrpar{\inf_{x\in\bS^{n-1}} \norm{Ax} \le c\sqrt{N}} \le e^{-c\sqrt{Nn}}.
    \]
    This is because the net argument requires us to bound the probability of two events:
    \begin{enumerate}
    \item $\{\inf_{y \in \cN} \norm{Ay} \le c\sqrt{N}\}$, for an appropriate net $\cN$ and absolute constant $c > 0$. 
    \item $\{\hnorm{A}_{HS} \ge C\sqrt{Nn}\}$, for an appropriate absolute constant $C > 0$.
    \end{enumerate}
    The first event holds with probability at most $e^{-cN}$ while the second event holds with probability at most $e^{-c\sqrt{Nn}}$, as guaranteed by Paouris's inequality. 
    By working with $\tilde{A}$ instead of $A$, the bound on the probability of the first event is larger (the size of $\cN$ is now greater) but is still at most $e^{-cN}$ (with a potentially smaller $c$). 
    On the other hand, the second event now occurs with probability at most $e^{-cN}$, once again by Paouris's inequality.  
\end{rem}
We now turn our attention to the proof of Theorem \ref{thm:comp}. The proof relies on an epsilon net argument that employs a sparse lattice-like net coupled with the Klartag-Lehec and Bizeul's slicing estimate \refT{thm:Bizeul}, and Paouris's inequality (Theorem \ref{thm:Paouris}). 

First, we recall the notion of random rounding (see e.g. \cite{KA, KlLi, L24}.) 
For a vector $y \in \R^n$ and parameter $\eps > 0$ we define a random vector $\eta_y \in \frac{\eps}{\sqrt{n}}\ZZ \times \frac{\eps}{\sqrt{n}}\ZZ \times \cdots \times \frac{\eps}{\sqrt{n}}\ZZ$ with independent coordinates such that $\hnorm{y-\eta_y}_\infty \le \eps/\sqrt{n}$ almost surely and $\E \eta_y = y$. Specifically, writing $y_i = (\eps/\sqrt{n})(k_i+p_i)$, where $k_i \in \ZZ, p_i \in [0,1)$, we have

\begin{equation}\label{eq:round-def}
    (\eta_y)_i = 
    \begin{cases}
        \frac{\eps k_i}{\sqrt{n}}& \text{w.p.} \,\, 1-p_i  \\
        \frac{\eps(k_i+1)}{\sqrt{n}}& \text{w.p.} \,\, p_i.
    \end{cases}
\end{equation}

The next result is a bit similar to Lemma 3.2 from \cite{L21}.

    \begin{prop}\label{prop:3}
        Let $m := \ceil{\delta n} + \ceil{n\rho^2/\eps^2 + 4n\rho/\eps}$, with $\delta,\rho,\eps \in (0,1)$. Then there exists a set $\cN \subset \RR^n$ of size $(C/(\delta^{3/2}\eps)^m$
        such that for any deterministic $N\times n$ matrix $A,$
        
        \begin{equation}\label{eq:approx}
        \sup_{y \in \comp(\delta,\rho)}\inf_{z \in \cN}\norm{A(y-z)} \leq \frac{2\eps}{\sqrt{n}}\hnorm{A}_{HS},
        \end{equation}
        
        where $C > 0$ is an absolute constant.
    \end{prop}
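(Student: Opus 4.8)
The plan is to take $\cN$ to consist of all $m$-sparse points of the rescaled lattice $\tfrac{\eps}{\sqrt n}\ZZ^n$ that lie in a Euclidean ball of absolute radius, and to use the random rounding \eqref{eq:round-def} to certify the approximation. Precisely, I would set
\[
\cN := \Bigl\{\, z \in \tfrac{\eps}{\sqrt n}\ZZ^n \ :\ \bigabs{\supp(z)} \le m,\ \ \norm{z} \le 4 \,\Bigr\},
\]
which is finite and invariant under coordinate permutations, so there is no loss in reordering the coordinates of a given $y$. Fix $y \in \comp(\delta,\rho)$ and a deterministic $A$, and let $\eta_y$ be the random rounding of $y$ with parameter $\eps$. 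It suffices to produce a realization of $\eta_y$ that lies in $\cN$ and satisfies $\norm{A(y-\eta_y)} \le \tfrac{2\eps}{\sqrt n}\hnorm{A}_{HS}$.

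For the approximation, write $A(y-\eta_y) = \sum_i (y_i-(\eta_y)_i)a_i$ with $a_i$ the columns of $A$; since the coordinates of $\eta_y$ are independent with $\E\eta_y = y$, the cross terms vanish and (using $\operatorname{Var}((\eta_y)_i) \le \eps^2/(4n)$) $\E\norm{A(y-\eta_y)}^2 = \sum_i\operatorname{Var}((\eta_y)_i)\norm{a_i}^2 \le \tfrac{\eps^2}{4n}\hnorm{A}_{HS}^2$, so by Markov the event $B_2 = \{\norm{A(y-\eta_y)} > \tfrac{2\eps}{\sqrt n}\hnorm{A}_{HS}\}$ has probability at most $\tfrac1{16}$. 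For the sparsity, use that $y$ is within $\rho$ of a $\lceil\delta n\rceil$-sparse vector to pick $\cI_1$ with $\abs{\cI_1} = \lceil\delta n\rceil$ and $\sum_{i\notin\cI_1}y_i^2 \le \rho^2$; then $\cI_2 := \{i\notin\cI_1 : \abs{y_i}\ge\eps/\sqrt n\}$ has $\abs{\cI_2} \le n\rho^2/\eps^2$, and $\cI_3 := \{i\notin\cI_1\cup\cI_2 : (\eta_y)_i\ne 0\}$ satisfies (directly from \eqref{eq:round-def}, since $\abs{y_i}<\eps/\sqrt n$ there) $\bp((\eta_y)_i\ne 0) = \sqrt n\,\abs{y_i}/\eps$, whence $\E\abs{\cI_3} = \tfrac{\sqrt n}{\eps}\sum_{i\notin\cI_1\cup\cI_2}\abs{y_i} \le \tfrac{\sqrt n}{\eps}\cdot\sqrt n\,\rho = n\rho/\eps$ by Cauchy--Schwarz; so $B_1 = \{\abs{\cI_3} > 4n\rho/\eps\}$ has probability at most $\tfrac14$ by Markov, and outside $B_1$ one has $\bigabs{\supp(\eta_y)} \le \abs{\cI_1}+\abs{\cI_2}+\abs{\cI_3} \le m$. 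As $\bp(B_1\cup B_2) < 1$, the probabilistic method gives a realization of $\eta_y$ which is $m$-sparse, lies in $\tfrac{\eps}{\sqrt n}\ZZ^n$, and satisfies the displayed bound; and for it $\norm{\eta_y} \le \norm{y} + \norm{\eta_y-y}$ with $\norm{\eta_y-y}^2 \le \sum_{i:(\eta_y)_i=0}y_i^2 + \bigabs{\supp(\eta_y)}\,\eps^2/n \le 1 + m\eps^2/n \le 9$ (using $\delta,\rho,\eps\in(0,1)$), so $\eta_y \in \cN$. This proves \eqref{eq:approx}.

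The step I expect to be the main obstacle is estimating $\abs{\cN}$ \emph{with no power of $n$}: a crude count of lattice points in a coordinate box loses a factor $(\sqrt n)^{\lceil\delta n\rceil}$. The remedy is the volumetric count of lattice points in a \emph{ball} --- the disjoint $(\eps/\sqrt n)^k$-cubes centred at the points of $\tfrac{\eps}{\sqrt n}\ZZ^k$ inside $5B_2^k$ fit inside a slightly larger ball --- which gives at most $5^k\mathrm{vol}(B_2^k)(\sqrt n/\eps)^k \le (C_1\sqrt n/(\sqrt k\,\eps))^k$ since $\mathrm{vol}(B_2^k) \le (C_1/\sqrt k)^k$. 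Multiplying by $\binom nk \le (en/k)^k$ for the choice of support, the number of $z\in\cN$ with $\bigabs{\supp(z)} = k$ is at most $g(k) := (C_2 n^{3/2}/(k^{3/2}\eps))^k$. Now $\log g$ (as a function of a real variable) has its interior maximum at $k^\ast = e^{-1}(C_2 n^{3/2}/\eps)^{2/3} \ge n$ (as $\eps<1$ and $C_2$ is large enough), so when $m<n$ the map $g$ is increasing on $[1,m]$ and $\abs{\cN} \le (m+1)g(m)$; and since $m \ge \lceil\delta n\rceil \ge \delta n$ we get $(n/m)^{3/2}\le\delta^{-3/2}$, hence $g(m)\le(C_2/(\delta^{3/2}\eps))^m$ and, absorbing $m+1\le 2^m$, $\abs{\cN}\le(C/(\delta^{3/2}\eps))^m$ with $C$ absolute. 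The case $m\ge n$ is immediate, as then the dimension-$n$ lattice-in-ball count already yields $\abs{\cN}\le(C/\eps)^n\le(C/(\delta^{3/2}\eps))^m$.
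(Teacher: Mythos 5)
Your proposal is correct and follows essentially the same route as the paper: a sparse-lattice net certified via random rounding, with the two bad events (poor approximation in $\norm{A(\cdot)}$ and loss of $m$-sparsity) controlled by Markov's inequality and combined by the probabilistic method, and the cardinality bounded by a volumetric count of lattice points in a ball times a binomial factor for the support. The only differences are bookkeeping: you stratify the count by exact support size (hence the monotonicity argument for $g$) and treat $m\ge n$ explicitly, whereas the paper counts within $m$-dimensional coordinate subspaces directly; both yield the same bound $(C/(\delta^{3/2}\eps))^m$.
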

    \begin{proof}
    Let $m := \ceil{\delta n} + \ceil{n\rho^2/\eps^2 + 4n\rho/\eps}$. 
    We define $\cN$ as
    \[
    \cN := \lrpar{\frac{\eps}{\sqrt{n}}\ZZ \times \frac{\eps}{\sqrt{n}}\ZZ \times \cdots \times \frac{\eps}{\sqrt{n}}\ZZ} \cap {\sparse}(m) \cap (1+\eps)B_2^n.
    \]
    Let $y \in \comp(\delta,\rho)$ and let $\eta_y$ be the random rounding of $y$ with parameter $\eps$. On one hand, since $\E(y_i-\eta_i)^2\leq \frac{\epsilon^2}{n},$ we get
     \[
    \E\norm{A(y-\eta_y)}^2=\sum_{i=1}^n \E\langle  A^Te_i, y-\eta_y\rangle^2 \le\sum_{i=1}^n \frac{\eps^2}{n} |A^T e_i|^2=  \frac{\eps^2}{n}\hnorm{A}_{HS}^2.
    \]
   Therefore, by Markov's inequality,
    \begin{equation}\label{rounding-bnd}
    \bp\left(\norm{A(y - \eta_y)} > \frac{2\eps}{\sqrt{n}}\hnorm{A}_{HS}\right) < 1/2.
    \end{equation}
    By construction, $\eta_y\in \lrpar{\frac{\eps}{\sqrt{n}}\ZZ \times \frac{\eps}{\sqrt{n}}\ZZ \times \cdots \times \frac{\eps}{\sqrt{n}}\ZZ} \cap (1+\eps)B_2^n.$ It remains to note that 
    \begin{equation}\label{sparse}
    \bp(\eta_y\in {\sparse}(m))>0.75.
    \end{equation}
    Then combining (\ref{rounding-bnd}) and (\ref{sparse}) we conclude that with probability at least $0.25$, the vector $\eta_y\in \cN$ and simultaneously satisfies $|A(y-\eta_y)|\leq\frac{2\eps}{\sqrt{n}}\|A\|_{HS},$ and therefore there exists a realization of $\eta_y$ which satisfies the conclusion of our proposition. 

    Indeed, let us verify (\ref{sparse}). Without loss of generality we may assume that $y$ has non-negative entries satisfying $y_1 \ge y_2 \ge \cdots \ge y_n$. We define 

\[
    J := \{1,2,\cdots,\ceil{\delta n}\} \cup \{i \in [n]: y_i \ge \eps/\sqrt{n}\}.
\]

Since $\sum_{i > \ceil{\delta n}} y_i^2 \le \rho^2$ we may bound $|J|$ by

\[
    |J| \le \ceil{\delta n} + \frac{n\rho^2}{\eps^2}.
\]

    Since $\sum_{i \in J^c} y_i^2 \le \rho^2$, the definition of random rounding (e.g. see \eqref{eq:round-def}) gives
    
$$
    \E[\#\{i \in J^c:(\eta_y)_i=\frac{\eps}{\sqrt{n}}\}] = \sum_{i \in J^c}p_i\leq \frac{\sqrt{n}}{\eps}\cdot\sum_{i \in J^c} |y_i| \le \frac{n\rho}{\eps},
$$

where the last inequality follows from Cauchy-Schwarz. Therefore by Markov's inequality,

$$
\bp\left(\eta_y\not\in \sparse(m)\right) \le \bp\left(\#\{i \in J^c:(\eta_y)_i=\frac{\eps}{\sqrt{n}}\} > \frac{4n\rho}{\eps}\right)< \frac{(n\rho/\eps)}{(4n\rho/\eps)} = 0.25.
$$

This verifies \eqref{sparse}.

\medskip

We now bound $\#\cN$  using the following observations:
    
    \begin{enumerate}
        \item Each point in $\cN$ is contained in an $m$-dimensional coordinate hyperplane.
        
        \item On every $m$-dimensional coordinate hyperplane, each point in $\cN$ is the vertex of an $m$-dimensional hypercube of volume $(\eps/\sqrt{n})^m$ with the interiors of the hypercubes being disjoint and contained in $(1+\eps)B_2^m$.
        
        \item Each hypercube has $2^m$ vertices.
    \end{enumerate}
    
    Observe now that the number of $m$-dimensional coordinate hyperplanes is $\binom{n}{m}$ and, for a given hyperplane, the maximum number of hypercubes that can fit in $(1+\eps)B_2^m$ is at most $(1+\eps)^m{\rm vol}(B_2^m)/(\eps/\sqrt{n})^m$. Therefore we can bound $\#\cN$ by
    
    \begin{align*}
        \binom{n}{m} \cdot \frac{(1+\eps)^m{\rm vol}(B_2^m)}{(\eps/\sqrt{n})^m} \cdot 2^m 
        &\leq \lrpar{\frac{ne}{m}}^m \cdot C^m\lrpar{\frac{1}{\eps} + 1}^m \lrpar{\frac{n}{m}}^{m/2} \cdot 2^m,\\
        &\le C^m \cdot \delta^{-m} \cdot \eps^{-m} \cdot \delta^{-m/2} = \lrpar{\frac{C}{\delta^{3/2}\eps}}^m,
    \end{align*}
where in the second inequality we used the fact that $m \ge \delta n$.
\end{proof}

\begin{proof}[Proof of Theorem \ref{thm:comp}]
We take $\eps \in (0,1/2]$ to be a positive parameter whose value we will specify later. Let $m := \ceil{\delta n} + \ceil{n\rho^2/\eps^2 + 4n\rho/\eps}$. By Proposition \ref{prop:3} there exists a deterministic net, $\cN$, of size at most $(C_1/(\delta^{3/2}\eps))^m$, such that  
$$\sup_{y \in \comp(\delta,\rho)}\inf_{z \in \cN}\norm{A(y-z)} \leq \frac{(\eps/C_2)}{\sqrt{n}}\hnorm{A}_{HS}.$$  

Define the following events:
\[
\cE_1 := \{\hnorm{A}_{HS} \le C_2\sqrt{Nn}\}, \,\, \cE_2 := \{\inf_{y \in \cN} \norm{Ay} \ge 2\eps\sqrt{N}\}.
\]
When $\cE_1$ and $\cE_2$ are both true, we have that 
\[
\inf_{x \in \comp(\delta,\rho)} \norm{Ax} \ge \inf_{y \in \cN} \norm{Ay} - \sup_{y \in \comp(\delta,\rho)}\inf_{z \in \cN}\norm{A(y-z)} \ge 2\eps\sqrt{N} - \eps\sqrt{N} \ge \eps\sqrt{N}. 
\]
Thus to prove Theorem \ref{thm:comp}, it suffices to show that $\bp( \cE_1^c)+\bp(\cE_2^c) \le \exp(-c\sqrt{Nn})$. We first bound $\bp(\cE_1^c)$. By Paouris' inequality Theorem \ref{thm:Paouris}, we have 

\[
\bp(\cE_1^c) = \bp(\hnorm{A}_{HS} \ge C_2\sqrt{Nn}) \le \exp(-c\sqrt{Nn}).
\]

Next we bound $\bp(\cE_2^c)$. For a fixed unit vector $v$, the map $A \mapsto Av$ is a linear transformation from $\R^{N \times n}$ to $\R^n$. If $A$ is an isotropic matrix then $\E[Av] = 0$ and $\E[(Av)(Av)^\top]= I_N$, and so $Av$ is an isotropic log-concave random vector in $\R^N$. Then \eqref{eq:Bizeul} of Theorem \ref{thm:Bizeul} gives
\begin{equation*}\label{eq:point-wise:1}
\sup_{v \in \bS^{n-1}} \bp(\norm{Av} \le 4\eps\sqrt{N}) \le (C_3\eps)^N.
\end{equation*}
Since $\norm{Ay} = \norm{y}\norm{Ay/\norm{y}}$ and $\inf_{y \in \cN} \norm{y} \ge 1-\eps \ge 1/2$, the previous line implies that
\begin{equation*}\label{eq:point-wise:2}
\sup_{y \in \cN} \bp(\norm{Ay} \le 2\eps\sqrt{N}) \le (C_3\eps)^N.
\end{equation*}

Taking a union bound over $\cN$ and applying the point-wise bound for each net-point we get 
\[
\bp(\cE_2^c) \le \#\cN \cdot (C_3\eps)^N \le \lrpar{\frac{C_1}{\delta^{3/2}\eps}}^m (C_3\eps)^N.
\]
Taking $\rho = (1-\delta)\eps/5$ we get 
$$m \le \ceil{\delta n} + \ceil{2(1-\delta)n/5} < (1+\delta)n/2 \le (1+\delta)N/2.$$ Therefore
\[
\lrpar{\frac{C_1}{\delta^{3/2}\eps}}^m (C_3\eps)^N \le \lrpar{\frac{C^{1/(1-\delta)}\eps}{\delta^{(3/2)(1+\delta)/(1-\delta)}}}^{N(1-\delta)/2}.
\]
Taking $\eps =  \min(1/2,\delta^{(3/2)(1+\delta)/(1-\delta)}/(2C^{1/(1-\delta)}))$ we conclude that $\bp(\cE_2^c) \le e^{-cN}$.
Therefore, 
$$\bp(\cE_1^c) + \bp(\cE_2^c) \le e^{-c\sqrt{Nn}} + e^{-cN} \le e^{-c\sqrt{Nn}},$$ and the result follows. 
\end{proof}
\begin{rem}\label{rem:tall}
    We note that the proof of \refT{thm:comp} can be easily modified to give a proof of \refT{thm:tall} in the case of $N/n \in (1+\lambda,2)$. This is done by taking a net over the unit sphere, instead of just the compressible vectors, using \refL{lem:livshyts}. Then the constant $c$ in \eqref{eq:comp} of \refT{thm:comp} goes to 0 as $\lambda$ goes to 0. The point is that in this regime $\sqrt{Nn}$ is of order $N$, making the reduction to the compressible case unnecessary.
\end{rem}

\section{Unconditional square case}\label{sec:uncond}

In this subsection we will prove the small ball estimate for the smallest singular value of an unconditional isotropic log-concave square matrix. For the remainder of the subsection, we will take $\delta,\rho \in (0,1)$ to be absolute constants as guaranteed by Theorem \ref{thm:comp}. Since $\bS^{n-1} = \comp(\delta,\rho) \cup \incomp(\delta,\rho)$, we have
 
\[
\bp\lrpar{\inf_{x \in \bS^{n-1}} \norm{Ax} < \frac{\eps}{\sqrt{n}}} \leq \bp\lrpar{\inf_{x \in \comp(\delta,\rho)} \norm{Ax} < \frac{\eps}{\sqrt{n}}} + \bp\lrpar{\inf_{x \in \incomp(\delta,\rho)} \norm{Ax} < \frac{\eps}{\sqrt{n}}}.
\]

By \eqref{eq:comp} of Theorem \ref{thm:comp}, the first term of the right hand side is at most $e^{-cn}$.
To deal with the second term, we employ the Rudelson-Vershynin invertibility-via-distance approach for square matrices: see e.g. Lemma 3.5 of \cite{RV08} which yields
\[
\bp\lrpar{\inf_{x \in \incomp(\delta,\rho)} \norm{Ax} < \frac{\eps}{\sqrt{n}}} \leq \frac{4}{\delta n}\sum_{i = 1}^n \bp\lrpar{{\dist}(A_i,H_i) < \eps},
\]
where $H_i$ is the subspace spanned by all but the $i$th column of $A.$ Without loss of generality it suffices to show that $\bp\lrpar{{\dist}(A_1,H_1) < \eps} \leq C\eps + e^{-cn}$.

To this end, we first recall that incompressible vectors are spread, as was shown by Rudelson and Vershynin:
\begin{prop}\citep[Lemma 2.5]{RV08}\label{prop:spread}
    Let $x \in \incomp(\delta,\rho)$. Then there exists a set $J \subset [n]$ of size at least $\rho^2\delta n/2$ such that 
    \[
    \frac{\rho}{\sqrt{2n}} \leq |x_i| \leq \frac{1}{\sqrt{\delta n}},
    \]
    for all $i \in J$.
\end{prop}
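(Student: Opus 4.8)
The plan is to split the coordinates of $x$ into three groups --- large, medium, and small --- and to show that the medium group is the set $J$ we want, with its size controlled by incompressibility. Throughout, for a set $\sigma \subset [n]$ I write $x_\sigma$ for the vector agreeing with $x$ on $\sigma$ and vanishing off $\sigma$.

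First I would handle the large coordinates. Set $\sigma_1 := \{i \in [n] : |x_i| > 1/\sqrt{\delta n}\}$. Since $x \in \bS^{n-1}$,
\[
1 = |x|^2 \ge \sum_{i \in \sigma_1} x_i^2 > \frac{|\sigma_1|}{\delta n},
\]
so $|\sigma_1| < \delta n$; in particular $x_{\sigma_1} \in \sparse(\delta n)$. Now define
\[
J := \bigset{i \in [n] : \tfrac{\rho}{\sqrt{2n}} \le |x_i| \le \tfrac{1}{\sqrt{\delta n}}},
\]
which is exactly $[n]$ with $\sigma_1$ and the "small" coordinates $S := \{i : |x_i| < \rho/\sqrt{2n}\}$ removed. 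The two-sided estimate on $|x_i|$ for $i \in J$ is immediate from the definition, so the only thing left is the lower bound $|J| \ge \rho^2\delta n/2$.

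To obtain it, I would invoke incompressibility: since $x \in \incomp(\delta,\rho) = \bS^{n-1}\setminus\{x : \dist(x,\sparse(\delta n)) \le \rho\}$ and $x_{\sigma_1} \in \sparse(\delta n)$, we have $|x - x_{\sigma_1}|^2 > \rho^2$. On the other hand $[n]\setminus\sigma_1 = J \sqcup S$, so
\[
|x - x_{\sigma_1}|^2 = \sum_{i \in J} x_i^2 + \sum_{i \in S} x_i^2 \le |J|\cdot\frac{1}{\delta n} + n\cdot\frac{\rho^2}{2n} = \frac{|J|}{\delta n} + \frac{\rho^2}{2}.
\]
Comparing the two displays yields $|J|/(\delta n) > \rho^2/2$, i.e. $|J| > \rho^2\delta n/2$, which finishes the proof.

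I do not expect any genuine obstacle here; the argument is elementary and self-contained once the compressible--incompressible definitions are unwound. The only points requiring care are bookkeeping with strict versus non-strict inequalities (the strict $\dist(x,\sparse(\delta n)) > \rho$ coming from the definition of $\incomp$, and the strict bound $|\sigma_1| < \delta n$ so that $x_{\sigma_1}$ is genuinely in $\sparse(\delta n)$), and the trivial observation that the interval $[\rho/\sqrt{2n},\,1/\sqrt{\delta n}]$ is nonempty because $\rho^2\delta < 2$.
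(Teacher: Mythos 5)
Your proof is correct, and it is essentially the standard argument of \citep[Lemma 2.5]{RV08}, which the paper cites without reproving: remove the coordinates exceeding $1/\sqrt{\delta n}$ (at most $\delta n$ of them, giving a sparse vector), use incompressibility to force $\ell_2$-mass greater than $\rho^2$ on the remainder, discard at most $\rho^2/2$ of mass from coordinates below $\rho/\sqrt{2n}$, and count the medium coordinates. No gaps; the bookkeeping with strict inequalities is handled correctly.
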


Next we remark that $H_i$ is unlikely to intersect the set of compressible vectors:
\begin{prop}\label{prop:normal}
Let $A \in \R^{n \times (n-1)}$ be an isotropic log-concave matrix and let $H$ be the column span of $A$. Then $\bp(H^\perp \cap \comp(\delta,\rho) \neq \emptyset) \le e^{-cn}$.
\end{prop}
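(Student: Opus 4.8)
The plan is to reduce the claim to the (wide‑matrix) analogue of \refT{thm:comp} for $A^\top$. Almost surely the $n-1$ columns of $A$ are linearly independent, so $H$ has dimension $n-1$ and $H^\perp$ is a line; if $v\in\bS^{n-1}$ spans $H^\perp$ then $A^\top v=0$. Hence, for every $\eps>0$,
\[
\bigl\{H^\perp\cap\comp(\delta,\rho)\neq\emptyset\bigr\}\subseteq\Bigl\{\inf_{x\in\comp(\delta,\rho)}\norm{A^\top x}\le\eps\sqrt n\Bigr\},
\]
so it suffices to bound the probability on the right by $e^{-cn}$ for one conveniently chosen $\eps=\eps(\delta)$. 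Note that, unlike in \refT{thm:comp}, we do not need a lower bound of the sharp order $\sqrt n$ here: any fixed positive lower bound on $\inf_{x\in\comp(\delta,\rho)}\norm{A^\top x}$ holding with probability $1-e^{-cn}$ already forces $H^\perp$ to miss the compressible vectors.

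From here the argument mirrors the proof of \refT{thm:comp}, with $A$ replaced by $A^\top$. The one input that must be rechecked is the pointwise small‑ball estimate: for fixed $v\in\bS^{n-1}$, the linear map $A\mapsto A^\top v$ sends the isotropic log‑concave vector $A\in\RR^{n(n-1)}$ to a vector in $\RR^{n-1}$ that is again log‑concave with mean zero and covariance $\bigl(\sum_{i}v_i^2\bigr)I_{n-1}=I_{n-1}$ since $\norm v=1$; thus $A^\top v$ is isotropic log‑concave in $\RR^{n-1}$, and \refT{thm:Bizeul} gives $\bp(\norm{A^\top v}\le t\sqrt{n-1})\le (Ct)^{n-1}$. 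Next, apply \refPp{prop:3} to the deterministic matrix $A^\top$ (the net there depends only on $n,\delta,\rho,\eps$, not on the matrix), intersecting $\cN$ with $\{z:\norm z\ge \tfrac12\}$ — which preserves the approximation property, since the approximating $z$ may be taken to be a random rounding of $y$, hence within $\eps$ of $\bS^{n-1}$. This yields $\cN\subset\RR^n$ with $\#\cN\le(C/(\delta^{3/2}\eps))^{m}$, $m:=\ceil{\delta n}+\ceil{n\rho^2/\eps^2+4n\rho/\eps}$, such that every $y\in\comp(\delta,\rho)$ has some $z\in\cN$ with $\norm z\ge\tfrac12$ and $\norm{A^\top(y-z)}\le\tfrac{2\eps}{\sqrt n}\hnorm{A^\top}_{HS}=\tfrac{2\eps}{\sqrt n}\hnorm A_{HS}$. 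By Paouris's inequality \refT{thm:Paouris}, $\hnorm A_{HS}\le C_2\sqrt{n(n-1)}$ off an event of probability at most $e^{-cn}$; on that event, the existence of a compressible $x$ with $A^\top x=0$ produces $z\in\cN$ with $\norm z\ge\tfrac12$ and $\norm{A^\top z}=\norm{A^\top(z-x)}\le 2C_2\eps\sqrt{n-1}\le C_3\eps\sqrt n$. A union bound over $\{z\in\cN:\norm z\ge\tfrac12\}$, using the pointwise estimate applied to $z/\norm z\in\bS^{n-1}$, gives
\[
\bp\bigl(H^\perp\cap\comp(\delta,\rho)\neq\emptyset\bigr)\le e^{-cn}+\Bigl(\tfrac{C_1}{\delta^{3/2}\eps}\Bigr)^{m}(C_4\eps)^{n-1}.
\]

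It remains to choose $\eps$. As in \refT{thm:comp}, once $m\le(1+\delta)n/2$ the second term is bounded, up to lower‑order factors, by $\bigl((C_1/\delta^{3/2})^{(1+\delta)/2}C_4\,\eps^{(1-\delta)/2}\bigr)^{n}$, which is $\le e^{-cn}$ provided $\eps$ lies below a threshold $\eps_0(\delta)$ (we also take $\eps\le\tfrac12$). The genuine point of care — and the main obstacle — is that here $\rho$ is a prescribed constant rather than a parameter slaved to $\eps$ as in the proof of \refT{thm:comp}: the requirement $m\le(1+\delta)n/2$ amounts to $\rho^2/\eps^2+4\rho/\eps\le(1-\delta)/2$, i.e.\ $\eps\ge c(\delta)\rho$, so both constraints on $\eps$ are simultaneously satisfiable only when $\rho$ is small enough relative to $\delta$. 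This is harmless: the constant $\rho=\rho(\delta)$ furnished by \refT{thm:comp} may always be shrunk further — this only shrinks $\comp(\delta,\rho)$, so \refT{thm:comp}, \refPp{prop:spread}, and the Rudelson--Vershynin distance inequality all remain valid — and one uses this smaller value of $\rho$ throughout \refS{sec:uncond}.
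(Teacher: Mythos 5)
Your proposal is correct and follows essentially the paper's route: the whole point is that any $x\in H^\perp$ satisfies $A^\top x=0$ and that $A^\top$ is again an isotropic log-concave matrix, after which the paper simply invokes \refT{thm:comp} with $A^\top$ in place of $A$, whereas you re-run its net/small-ball/Paouris argument by hand. The ``main obstacle'' you flag is not actually there: the $\rho$ fixed in \refS{sec:uncond} is precisely the constant that \refT{thm:comp} furnishes for this $\delta$ (it depends only on $\delta$, not on the matrix or its dimensions), so the black-box application to $A^\top$ already has matched parameters and no shrinking of $\rho$ is needed --- though your shrink-$\rho$ workaround is legitimate, and your inlined argument does have the incidental merit of transparently covering the $(n-1)\times n$ aspect ratio, which the statement of \refT{thm:comp} glosses over.
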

\begin{proof}
    If $x \in H^\perp$ then $A^\top x = 0$. Since $A^\top \in \R^{n \times (n-1)}$ is an isotropic log-concave matrix we have
    \[
    \bp \lrpar{H^\perp \cap \comp(\delta,\rho) = \emptyset} \le \bp\lrpar{\inf_{x \in \comp(\delta,\rho)} \norm{A^\top x} \le c\sqrt{n}} \le e^{-cn},
    \]
    with the last inequality following from \eqref{eq:comp} of Theorem \ref{thm:comp} applied with $A^{\top}$ in place of $A$. 
\end{proof}

Next, we recall that projections of isotropic log-concave vectors are isotropic and log-concave:
\begin{prop}\label{prop:proj}
    Let $X \in \R^n$ be an isotropic log-concave random vector and let $X_I \in \R^{I}$ denote the orthogonal projection of $X$ onto the coordinate subspace indexed by $I$. Then $X_I$ is an isotropic log-concave random vector.  
\end{prop}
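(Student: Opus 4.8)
The plan is to verify the three defining conditions in turn: that $X_I$ is log-concave, that $\E X_I = 0$, and that ${\rm Cov}(X_I)$ equals the identity on $\R^{I}$. The latter two are immediate from linearity of expectation; the only point carrying content is the preservation of log-concavity under marginalization, which is classical.

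For log-concavity, I would observe that $X_I = \pi_I(X)$, where $\pi_I : \R^n \to \R^{I}$, $\pi_I(x) = (x_i)_{i \in I}$, is a linear map. Since $X$ is isotropic in $\R^n$, its covariance has full rank, so its support is full-dimensional and it has a log-concave density $f$ on all of $\R^n$. Writing $x = (x_I, x_{I^c})$ with $I^c := [n]\setminus I$, the density of $X_I$ is the marginal
\[
f_{X_I}(x_I) = \int_{\R^{I^c}} f(x_I, x_{I^c})\, dx_{I^c}.
\]
By Pr\'ekopa's theorem --- equivalently, by the Pr\'ekopa--Leindler inequality --- marginals of log-concave densities are log-concave, so $f_{X_I}$ is log-concave and hence $X_I$ is a log-concave random vector.

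For isotropicity, $\E X_I = \pi_I(\E X) = \pi_I(0) = 0$, and for $i, j \in I$ we have
\[
\bigl({\rm Cov}(X_I)\bigr)_{ij} = \E\bigl[(X_I)_i (X_I)_j\bigr] = \E[X_i X_j] = \bigl({\rm Cov}(X)\bigr)_{ij} = \delta_{ij},
\]
so ${\rm Cov}(X_I) = I_{|I|}$ and $X_I$ is isotropic. I do not anticipate any genuine obstacle here: the whole argument reduces to the single classical fact that marginalization preserves log-concavity, together with trivial mean and covariance bookkeeping.
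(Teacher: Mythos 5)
Your proposal is correct and follows essentially the same route as the paper: log-concavity of the marginal via Pr\'ekopa--Leindler, and isotropicity by noting that the mean projects to zero and that the covariance of $X_I$ is a principal submatrix of the identity. The extra remark about the support being full-dimensional is harmless bookkeeping and does not change the argument.
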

\begin{proof} The fact that projections of log-concave vectors are log-concave follows from Prekopa-Leindler inequality (see e.g. Theorem 3.6 in \cite{Liv-notes}). Isotropicity follows from definition: indeed, a projection of a centered vector remains centered, and the covariance matrix of $X_I$ is a sub-matrix of the covariance matrix of $X$, and all sub-matrices of an identity matrix are identity matrices.
\end{proof}

Next, we prove lower tail estimates for order-statistics of isotropic log-concave random vectors using the slicing estimate of Klartag, Lehec \cite{KL24}, Bizeul \cite{B25}.

\begin{prop}[isotropic log-concave vectors are spread]\label{prop:small-ball}
    Let $X \in \R^n$ be an isotropic log-concave random vector. Let $X^*$ be the decreasing rearrangement of $X$ according to absolute value (i.e. $|X^*_1| \geq |X^*_2| \geq \cdots \geq |X^*_n|$). Consider constants $c_1 \in (0,1),$ $c_2 > 0$. Then there exists $r> 0$ which depends only on $c_1, c_2$ such that 
    \[
    \bp(|X^*_{n(1-c_1)}| \leq r) \leq e^{-c_2n}.
    \]
\end{prop}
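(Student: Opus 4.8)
The goal is to show that many coordinates of an isotropic log-concave $X$ are simultaneously not too small, with probability exponentially close to $1$. The plan is to fix a small threshold $r$ (to be chosen depending only on $c_1,c_2$) and estimate the probability of the bad event $\{|X^*_{n(1-c_1)}| \le r\}$. Note that $|X^*_{n(1-c_1)}| \le r$ precisely means that at least $c_1 n$ coordinates of $X$ have absolute value at most $r$; that is, there exists a set $I \subset [n]$ with $|I| \ge c_1 n$ such that $|X_i| \le r$ for all $i \in I$. The idea is to union bound over the choice of $I$, and for each fixed $I$ use that $X_I$ is isotropic log-concave (by \refPp{prop:proj}) together with the small-ball estimate \refT{thm:Bizeul} applied in dimension $|I|$.

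Concretely, first I would reduce to a fixed-size set: it suffices to bound $\bp(|X_i| \le r \text{ for all } i \in I)$ for every $I$ with $|I| = \ceil{c_1 n} =: k$, and then multiply by $\binom{n}{k} \le (en/k)^k \le (Ce/c_1)^{c_1 n}$. For a fixed such $I$, since $\{|X_i| \le r \ \forall i \in I\} \subseteq \{|X_I| \le r\sqrt{k}\}$, \refPp{prop:proj} lets us apply \refT{thm:Bizeul} to the isotropic log-concave vector $X_I \in \R^k$ with $\eps = r$ (assuming $r < 1$), yielding
\[
\bp\lrpar{|X_i| \le r \text{ for all } i \in I} \le \bp\lrpar{|X_I| \le r\sqrt{k}} \le (C_0 r)^k.
\]
Combining via the union bound,
\[
\bp\lrpar{|X^*_{n(1-c_1)}| \le r} \le \binom{n}{k}(C_0 r)^k \le \lrpar{\frac{Ce}{c_1}}^{c_1 n}(C_0 r)^{\ceil{c_1 n}} \le \lrpar{\frac{C' r}{c_1}}^{c_1 n},
\]
where $C'$ is absolute. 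Finally, choosing $r := r(c_1,c_2)$ small enough that $\log(C' r/c_1) \le -c_2/c_1$ — i.e. $r \le (c_1/C')\,e^{-c_2/c_1}$, and also $r < 1$ so that \refT{thm:Bizeul} applies — gives the bound $e^{-c_2 n}$, as desired.

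The only genuine subtlety, rather than obstacle, is bookkeeping around ceilings: $n(1-c_1)$ need not be an integer, so one should interpret $|X^*_{n(1-c_1)}|$ via $X^*_{\ceil{n(1-c_1)}}$ (or $\lfloor\cdot\rfloor$), and correspondingly the number of small coordinates is $\ge n - \ceil{n(1-c_1)} \ge \lfloor c_1 n\rfloor$; this changes $k$ only by a bounded amount and does not affect the exponential rate, since the factor $(C_0 r)^{k}$ with $k$ replaced by $k-O(1)$ costs only a constant. There is no dependence on $c_2$ anywhere except in the final choice of $r$, which is exactly what the statement allows. Everything else is a direct application of \refT{thm:Bizeul} and \refPp{prop:proj}, so I expect no hard step — the main point is simply recognizing that the event is a union over coordinate-subsets of a small-ball event for a projection.
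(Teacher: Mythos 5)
Your proposal is correct and follows essentially the same route as the paper: express the event as a union over coordinate subsets $I$ of size $\approx c_1 n$, use \refPp{prop:proj} to see that $X_I$ is isotropic log-concave, apply the small-ball estimate of \refT{thm:Bizeul} to $|X_I|$, and finish with a union bound over $\binom{n}{c_1 n}$ subsets and a choice of $r \approx (c_1/C)e^{-c_2/c_1}$. Your handling of the integrality of $c_1 n$ is a harmless refinement that the paper leaves implicit.
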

\begin{proof}
Note the following inclusion of random events: 
\[
\lrcpar{|X^*_{n(1-c_1)}| < r} \subset \bigcup_{\overset{I \subset [n]}{\# I = c_1n}} \bigcap_{j \in I} \{|X_j| < r \}.
\]
 For a fixed $I \subset [n]$ with $\# I = c_1n$, the vector $X_I \in \R^I$ is isotropic and log-concave by Proposition \ref{prop:proj}. Therefore
\[
\bp\lrpar{\cap_{j \in I} \{|X_j| < r\}} \leq \bp\lrpar{\sum_{j \in I} X_j^2 \leq r^2c_1n} = 
\bp\lrpar{\norm{X_I}^2 < r^2c_1n} \leq (Cr)^{c_1n},
\]
with the last inequality following from the slicing bound of Klartag-Lehec \cite{KL24} and Bizeul \cite{B25}, see Theorem \ref{thm:Bizeul} in the preliminaries. Applying a union bound over all possible choices of $I$ we conclude that 
\[
\bp\lrpar{|X_{n-m}^*| < r }\leq \bp\left(\bigcup_{\overset{I \subset [n]}{\# I = c_1n}} \bigcap_{j \in I} \{|X_j| < r \}\right)\leq \binom{n}{c_1n}(Cr)^{c_1n} \leq \lrpar{\frac{Cre}{c_1}}^{c_1n}.
\]
Note now that we may take $r = (c_1/(Ce))e^{-c_2/c_1}$ to obtain the desired bound.
\end{proof}

\emph{Proof of Theorem \ref{thm:unconditional}.} As explained above, it suffices to bound $\bp(\dist(A_1,H_1) < \eps)$. Let $r$ be the absolute constant guaranteed by Proposition \ref{prop:small-ball} with $c_1 : =\delta^2\rho/8$ and $c_2 := 1$. 
Let $B \in \R^{n \times (n-1)}$ denote the sub-matrix of $A$ excluding the column $A_1$. Since $A$ has a density and $A_1,B$ partition the entries of $A$, the pair $(A_1,B)$ has a joint density. Therefore the conditional density of $A_1$ given $B$ is well-defined. Let $X^*$ denote the decreasing rearrangement of $X$ (i.e. $|X^*_1| \ge |X^*_2| \ge \cdots \ge |X^*_n|)$. We now define the following sets:
\begin{align*}
    \cS &:= \{X \in \R^n~:~|X^*_{n(1-\delta^2\rho/8)}| \ge r\}, \\
    \cT_1 &:= \{Y \in \R^{n \times (n-1)}: \bp(A_1 \not \in \cS| B = Y)\le e^{-n/2}\}, \\
    \cT_2 &:= \{Y \in \R^{n \times (n-1)}: \colspan (Y)^\perp \cap \bS^{n-1} \subseteq \incomp(\delta,\rho)\}, \\
    \cT_3 &:= \{Y \in \R^{n \times (n-1)}: \dim(\colspan (Y)^\perp) = 1\}. 
\end{align*}
Here, for a matrix $M,$ the notation $\colspan(M)$ stands for the span of the columns of $M.$

\medskip

We first bound $\bp(B \not \in\cT_2)$. By Proposition \ref{prop:normal} we have 
$$\bp(B \not \in\cT_2) \le \bp(H_1^\perp \cap \bS^{n-1} \not \subseteq \incomp(\delta,\rho)) \le e^{-cn}.$$ 
Next, we bound $\bp(B \not \in \cT_3)$. Note that $\dim(H_1^\perp) = 1$ if the columns of $A$ are linearly independent, which is equivalent to $\det(A) \neq 0$. 
Since $\det(A)$ is a polynomial in the entries of $A$ the corresponding level set can be viewed as a hypersurface (zero-set of a polynomial) in $\R^{n^2}$. Since $A$ follows an isotropic log-concave distribution, it is absolutely continuous with respect to Lebesgue measure. Since the Lebesgue measure of any hypersurface is 0 it follows that $\bp(\det(A) = 0) = 0$. Therefore, 
$$\bp(B \not \in \cT_3 )\le \bp(\dim(H_1^\perp) \neq 1) = 0.$$

Next, we bound $\bp(B \not \in \cT_1)$. By our choice of $r,c_2$, the definitions of $\cS$ and $\cT_1$, Proposition \ref{prop:small-ball} and Markov's inequality, we have $\bp(B \not \in \cT_1) \le e^{-n/2}$.
We conclude that
\[
\bp(\dist(A_1,H_1) < \eps) \le e^{-cn} + \bp(\dist(A_1,H_1) < \eps~|~ B \in \cT_1 \cap \cT_2 \cap \cT_3).
\]
Writing $Z := (A_1|B = Y)$, where $Y \in \cT_1 \cap \cT_2 \cap \cT_3$ is arbitrary, $Z$ is distributed as an unconditional log-concave random vector with diagonal covariance matrix. We will now argue that most of the diagonal entries of ${\rm Cov}(Z)$ are lower bounded by an absolute constant. Since $Y \in \cT_1$ we have that
\[
\sum_{i = 1}^n \min(1,\E(Z_i^2)/r^2) \ge \bp(|Z_{n(1-\delta^2\rho/8)}^*| \ge r)\cdot n(1-\delta^2\rho/8) \ge n(1-\delta^2\rho/6),
\]
where in the last inequality we used the fact that $(1-e^{-n/2})(1-\delta^2\rho/8) \ge (1-\delta^2\rho/6)$.
A further application of Markov's inequality gives
\[
\#\{i \in [n]: \E(Z_i^2)/r^2 \le 1/2\} \le n\delta^2\rho/3.
\]
In particular $\E(Z_i^2) \ge r^2/2$ for at least $n(1-\delta^2\rho/3)$ indices. Now, let $\eta \in \colspan(B)^\perp \cap \bS^{n-1}$ and consider the following sets of indices:

\begin{align*}
\cI &:= \{j : \E[Z_j^2] \ge r^2/2 \},\\
\cJ &:= \{j : |\eta_j| \ge \rho/(\sqrt{2n})\}.
\end{align*}

By our previous calculation, Proposition \ref{prop:spread} and the fact that $B \in \cT_2$ we have
\[
|\cI \cap \cJ| \ge |\cI| + |\cJ| - |\cI \cup \cJ| \ge n(1-\delta^2\rho/3) + n\delta^2\rho/2 - n = n\delta^2\rho/6.
\]
Since ${\rm Cov}(Z)$ is diagonal it follows that
\[
\E[|\langle Z, \eta \rangle|^2] = \sum_{i=1}^n \E[Z_i^2]\eta_j^2 \geq \frac{n\rho^2\delta}{6}\cdot \frac{r^2}{2} \cdot \frac{\rho^2}{2n} = \frac{r^2\rho^4\delta}{24}.
\]
In particular $\E[|\langle Z, \eta\rangle|^2]$ is bounded below by an absolute constant. Therefore, $\langle Z, \eta \rangle$ is a log-concave random variable that has mean 0 and non-zero constant variance. In particular it has a bounded density and satisfies $\bp(|\langle Z,\eta\rangle| < \eps) < C\eps$.  Now, since $B \in \cT_3$, we know that $\dist(Z,\colspan(B)) = |
\langle Z,\eta \rangle|$. Since the choice of $Y \in \cT_1 \cap \cT_2 \cap \cT_3$ was arbitrary we conclude that 
\[
\bp(\dist(A_1,H_1) < \eps|B \in \cT_1 \cap \cT_2 \cap \cT_3) \le C\eps.
\]
We conclude that 
$$\bp\left(\dist(A_1,H_1) < \eps\right) \le C\eps + e^{-cn}.\, \square$$

\begin{rem}
    In conjunction with upper tail estimates for order statistics of isotropic log-concave (see \citep[Proposition 1]{L10}), one can deduce the following: Let $(X,Y) \in \R^{n^2}$ be an unconditional isotropic log-concave random vector in $\R^{n^2}$, with $X \in \R^{n}$ being the first $n$ coordinates. Then, with exponentially small failure probability, $X$, conditioned on $Y$, is unconditional log-concave and, up to permutation of its coordinates, has a diagonal covariance matrix $\Sigma$ satisfying $r \leq \Sigma_{ii} \leq R$ for $1 \leq i \leq n(1-c_1)$ and $rnc_1 \leq \sum_{n(1-c_1) < i \leq n} \Sigma_{ii} \leq Rnc_1$. Here $r,R > 0$ are constants that depend on $c_1$ (as $c_1 \to 0$ we have  $r \to 0$ and $R \to \infty$).
\end{rem}

\begin{rem}
    In the proof of \refT{thm:unconditional}, the unconditionality of the matrix is crucial to our proof strategy. Indeed, our proof relies on the fact that $\langle Z,\eta \rangle$ is distributed as a log-concave random variable. The unconditional assumption allows us to deduce that $\langle Z,\eta \rangle$ has mean 0 and constant variance, thus satisfying a small ball estimate at every scale. Our approach for deriving a lower bound on $\E|\langle Z,\eta \rangle|^2$ hinges on the fact that ${\rm Cov}(Z)$ has a diagonal covariance matrix which reduces the problem to lower bounding the size of most entries of $Z$. Without unconditionality, we must instead lower bound $\E|\langle Z - \E Z,\eta \rangle|^2$. In this setting, we cannot use \refPp{prop:small-ball} to lower bound the contribution from diagonal terms to $\E|\langle Z - \E Z,\eta \rangle|^2$, since a-priori entries of $Z$ can be large but $Z - \E Z$ close to 0. Furthermore, $\sum_{ij} {\rm Cov}(Z - \E Z)_{ij}$ is not sufficiently concentrated so as to prevent non-trivial contribution from off-diagonal terms to $\E|\langle Z - \E Z,\eta \rangle |^2$. This is because $n^{-1}\sum_{ij} {\rm Cov}(A_1)_{ij} = \langle n^{-1/2}A_1,(1,1,\cdots,1)\rangle^2$ is distributed as the square of a mean 0 variance 1 log-concave random variable.
\end{rem}

\section{Independent columns case and the general aspect ratio}

In this subsection, we will prove the small ball type bounds on the smallest singular value for rectangular log-concave matrices with independent and isotropic columns. Our scheme relies on the ideas from the work of Rudelson, Vershynin \cite{RudVer-general}. We take $\delta,\rho \in (0,1)$ to be absolute constants as guaranteed by \refT{thm:comp}. In view of Theorem \ref{thm:comp} and Lemma \ref{lem:invert-via-dist}, we will use different strategies that change when $N/n$ crosses a certain threshold. We have two cases, depending on the aspect ratio of the matrix.

\subsection{Aspect ratio: $N/n \ge 1+\lambda$}
In this regime, Theorem \ref{thm:tall} is applicable. Therefore we have that $$\bp(\inf_{x \in \bS^{n-1}} \norm{Ax} \le c\sqrt{N}) \le e^{-cN},$$ where $c >0$ depends only on $\lambda$. Taking $\lambda > 0$ to be a sufficiently small constant, it suffices to prove \refT{thm:indep-columns} in the following regime of aspect ratio.
\subsection{Aspect ratio: $1 \le N/n \le 1+\lambda$}
By Theorem \ref{thm:comp}, we have that $$\bp\lrpar{\inf_{x \in \comp(\delta,\rho)}\norm{Ax} \le c\sqrt{N}} \le e^{-cN},$$ 
for some absolute constant $c > 0$. We are thus left with lower bounding $\norm{Ax}$ for $x \in \incomp(\delta,\rho)$. 
Our proof is based on the invertibility-via-distance approach. We begin by recording some technical lemmas related to this. The first lemma will require a definition.
 \begin{definition}\label{def:spread}
     A vector $v \in \bS^{d-1}$ is said to be spread if all entries satisfy $|v_i| \in [c/\sqrt{d},C/\sqrt{d}]$, for some positive constants $c,C$. We denote the set of spread vectors as ${\rm spread}_d$.
\end{definition}
\begin{lem}\cite[Lemma 6.2]{RV08}\label{lem:invert-via-dist}
    Let $d \in \{1,2\cdots,n\}$ and $A \in \R^{N \times n}$ be a random matrix. Then there exists $J \subset \{1,2,\cdots,n\}$ of size $d$ such that 
    \[
    \bp\lrpar{\inf_{x \in Incomp(\delta,\rho)}\norm{Ax} \leq \frac{\eps d}{\sqrt{n}}} \leq
    C^d\bp\lrpar{\inf_{x \in {\rm spread}_d} \dist(A_Jx,H_{J^c}) \leq \eps\sqrt{d}},
    \]
    where $C > 0$ and the spread parameters in Definition \ref{def:spread} depend only on $\delta$ and $\rho$.
\end{lem}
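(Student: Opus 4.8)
The plan is to carry out, for a rectangular matrix, the Rudelson--Vershynin reduction of invertibility on the incompressible vectors to a distance problem. The key input is \refPp{prop:spread}: every $x \in \incomp(\delta,\rho)$ carries a set $\sigma(x) \subset [n]$ of ``spread coordinates'' with $|\sigma(x)| \ge c_0 n$, where $c_0 := \rho^2\delta/2$, on which $\rho/\sqrt{2n} \le |x_i| \le 1/\sqrt{\delta n}$. Throughout I restrict to the only informative regime $d \le c_0 n$ (for larger $d$ no $d$-subset of $\sigma(x)$ exists and the asserted inequality says nothing).

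\textbf{Step 1 (choosing the set $J$).} For each $d$-subset $J\subseteq[n]$ put $\mathcal B_J := \{\inf\{\norm{Ax} : x\in\incomp(\delta,\rho),\ J\subseteq\sigma(x)\}\le \eps d/\sqrt n\}$, and let $\mathcal A$ be the event whose probability appears on the left of the lemma. For any realization of $A$ in $\mathcal A$, a (near-)minimizing $x\in\incomp(\delta,\rho)$ admits at least $\binom{c_0n}{d}$ admissible sets $J\subseteq\sigma(x)$, so that realization lies in at least $\binom{c_0n}{d}$ of the events $\mathcal B_J$; hence $\binom{c_0n}{d}\,\mathbbm{1}_{\mathcal A}\le\sum_{|J|=d}\mathbbm{1}_{\mathcal B_J}$ pointwise. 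Taking expectations and passing to the best $J$ gives $\bp(\mathcal A)\le\bigl(\binom{n}{d}/\binom{c_0n}{d}\bigr)\max_{|J|=d}\bp(\mathcal B_J)\le(e/c_0)^d\max_{|J|=d}\bp(\mathcal B_J)$, which is the factor $C^d$ with $C=C(\delta,\rho)$.

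\textbf{Step 2 (from $\mathcal B_J$ to a distance).} Fix the chosen $J$ and an admissible $x$, and split $x=x|_J+x|_{J^c}$ into its restrictions. Since $A_{J^c}(x|_{J^c})\in H_{J^c}$, we have $\dist\bigl(A_J(x|_J),H_{J^c}\bigr)\le\norm{A_J(x|_J)+A_{J^c}(x|_{J^c})}=\norm{Ax}\le\eps d/\sqrt n$. On $J\subseteq\sigma(x)$ one has $\rho^2 d/(2n)\le\norm{x|_J}^2\le d/(\delta n)$, so $v:=x|_J/\norm{x|_J}\in\bS^{d-1}$ has all coordinates of size comparable to $1/\sqrt d$, with comparison constants depending only on $\delta,\rho$; that is, $v\in{\rm spread}_d$. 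Dividing the distance bound by $\norm{x|_J}\ge\rho\sqrt{d/(2n)}$ gives $\dist(A_J v,H_{J^c})\le(\sqrt2/\rho)\,\eps\sqrt d$, so $\mathcal B_J\subseteq\{\inf_{v\in{\rm spread}_d}\dist(A_J v,H_{J^c})\le(\sqrt2/\rho)\eps\sqrt d\}$. Rescaling $\eps$ by the constant $\sqrt2/\rho$ (harmless, since the conclusion is only claimed up to constants depending on $\delta,\rho$) then yields exactly \refL{lem:invert-via-dist}.

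I expect the real content to be Step~1: a naive union bound over all $\binom{n}{d}$ subsets is hopelessly lossy, and the trick is that \emph{every} incompressible vector has exponentially many valid spread-sets, so a minimizer for a fixed $A$ is ``seen'' by a $\binom{c_0n}{d}/\binom{n}{d}\ge(c_0/e)^d$ fraction of the $\mathcal B_J$; one therefore averages over $J$ rather than union-bounds. Step~2 is elementary linear algebra plus the bounds $\rho\sqrt{d/(2n)}\le\norm{x|_J}\le\sqrt{d/(\delta n)}$; the only further point needing (routine) care is the passage between strict and non-strict infima, given that $\incomp(\delta,\rho)$ is not closed, which is handled by monotone limits in the threshold.
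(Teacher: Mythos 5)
The paper offers no proof of this lemma at all --- it is imported from Rudelson and Vershynin \cite[Lemma 6.2]{RV08} --- and your argument is essentially a reconstruction of their proof: averaging over the $\binom{c_0 n}{d}$ spread $d$-subsets of a near-minimizer (rather than union-bounding over all $\binom{n}{d}$ subsets), followed by the elementary estimate $\dist(A_J x|_J, H_{J^c}) \le \norm{Ax}$ and normalization of $x|_J$ via $\rho\sqrt{d/(2n)} \le \norm{x|_J} \le \sqrt{d/(\delta n)}$. Both steps check out, including the $(e/c_0)^d$ bookkeeping, the monotone-limit fix for the non-attained infimum, and the $\sqrt{2}/\rho$ rescaling of $\eps$, which is harmless since in the application it only changes the constant in the final $(C\eps)^{N-n+1}$ bound.

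One quibble: your reason for discarding $d > c_0 n = \rho^2\delta n/2$ (``the asserted inequality says nothing'') is not accurate. The inequality as quoted does assert something for such $d$, and it can in fact fail there: take $d = n$, so that $H_{J^c} = \{0\}$ and the right-hand side concerns $\inf_{x \in {\rm spread}_n}\norm{Ax}$, and let $A = I - vv^{\top}$ be deterministic with $v$ incompressible but far from every spread vector (e.g.\ half of its mass on one coordinate); then the left-hand side equals $1$ for every $\eps > 0$, while the right-hand side vanishes once $\eps$ is below a constant multiple of $1/\sqrt{n}$. So the restriction $d \le c(\delta,\rho)n$ is not a convenience of your argument but part of the lemma's true range of validity, exactly as in Rudelson--Vershynin; your proof covers that range, which is all the paper needs, since it applies the lemma with $d = N-n+1 \le \lambda n + 1$ for small $\lambda$.
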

For the rest of this section we will take $d:= N-n+1$. Note that \refL{lem:invert-via-dist} is applicable for this choice of $d$ and regime of aspect ratio since $N/n < 2$ implies that $N-n < n,$ hence $d = N-n+1 \le n$. For the rest of this subsection we will take $J$ to be set of indices guaranteed by Lemma \ref{lem:invert-via-dist}. For convenience we write $W := P_{H_{J^c}^\perp}A_J$. Note that $\dist(A_Jx,H_{J^c}) = \norm{Wx}$. We now give a small ball estimate for $\norm{Wx}$.
\begin{prop}\label{prop:net-bound-1}
Let $w \in \R^n$ and $x \in \frac{3}{2}B_2^n \setminus \frac{1}{2}B_2^n$. Then
\begin{equation}\label{eq:net-bound-1}
     \bp\lrpar{\norm{Wx - w} \leq t\sqrt{2d-1}} \leq (Ct)^{2d-1},
\end{equation}
where $C > 0$ is an absolute constant.
\end{prop}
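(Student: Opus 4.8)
The plan is to condition on the columns of $A$ not indexed by $J$ and reduce everything to the Klartag--Lehec/Bizeul small ball estimate \refT{thm:Bizeul} in dimension $2d-1$. Set $B := A_{J^c}$, which has $|J^c| = n-d = 2n-N-1$ columns in $\R^N$. Since each column of $A$ is absolutely continuous (being log-concave) and $2n-N-1 \le N$, the same argument used in the proof of \refT{thm:unconditional} --- that the simultaneous vanishing of all maximal minors is a Lebesgue-null event --- shows that almost surely $\colspan(B)$ has dimension exactly $n-d$, so $E := H_{J^c}^\perp = \colspan(B)^\perp$ has dimension exactly $N-(n-d) = 2d-1$. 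Fix such a realization of $B$; then $P := P_{H_{J^c}^\perp}$ is a fixed orthogonal projection onto the $(2d-1)$-dimensional subspace $E$, and I will bound $\bp(\,\cdot\mid B)$ uniformly in $B$.

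Conditionally on $B$, the columns $A_{j_1},\dots,A_{j_d}$ of $A_J$ remain independent isotropic log-concave vectors in $\R^N$ --- this is precisely the hypothesis that the columns of $A$ are independent. Writing $x = (x_1,\dots,x_d)$, we have $Wx = P(A_J x) = \sum_{k=1}^d x_k\, PA_{j_k}$. Choose an orthonormal basis of $E$ and identify $E \cong \R^{2d-1}$; extending it to an orthonormal basis of $\R^N$ turns $P$ into a coordinate projection after an orthogonal change of variables (and orthogonal images of isotropic log-concave vectors are again isotropic log-concave), so by \refPp{prop:proj} each $PA_{j_k}$ is an isotropic log-concave vector in $\R^{2d-1}$. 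A sum of independent log-concave vectors is log-concave (Prékopa--Leindler), hence $Wx$, viewed in $\R^{2d-1}$, is log-concave with mean $0$ and covariance $\sum_k x_k^2\, I_{2d-1} = \norm{x}^2 I_{2d-1}$; equivalently $Wx/\norm{x}$ is isotropic log-concave in $\R^{2d-1}$.

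It remains to apply \refT{thm:Bizeul}. Since $Wx \in E$, replacing the given $w$ by $Pw$ can only decrease $\norm{Wx - w}$ (because $\norm{Wx-w}^2 = \norm{Wx-Pw}^2 + \norm{w-Pw}^2$), so it suffices to bound the probability that $\norm{Wx - Pw} \le t\sqrt{2d-1}$. Applying \refT{thm:Bizeul} to the isotropic log-concave vector $Wx/\norm{x}$ in $\R^{2d-1}$ at the point $Pw/\norm{x}$ with $\eps = t/\norm{x}$ gives
\[
\bp\lrpar{\norm{Wx - w} \le t\sqrt{2d-1} \,\middle|\, B} \;\le\; \lrpar{\frac{Ct}{\norm{x}}}^{2d-1} \;\le\; (2Ct)^{2d-1},
\]
where the last step uses $\norm{x} \ge 1/2$. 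As this bound is independent of the (almost surely full-rank) realization of $B$, integrating over $B$ yields $\bp(\norm{Wx - w} \le t\sqrt{2d-1}) \le (2Ct)^{2d-1}$, which is \eqref{eq:net-bound-1} after relabelling the constant.

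I do not expect a serious obstacle here: the only non-routine points are (i) that conditioning on $A_{J^c}$ preserves independence, log-concavity and isotropicity of the remaining columns --- immediate from the independence-of-columns hypothesis together with \refPp{prop:proj} --- and (ii) the bookkeeping that $\dim H_{J^c}^\perp = 2d-1$ almost surely. The proposition is essentially a direct consequence of \refT{thm:Bizeul} once the conditioning is arranged; the real work of this section lies instead in \refL{lem:invert-via-dist} and in the net argument that will consume this pointwise estimate.
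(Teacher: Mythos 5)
Your proof is correct and follows essentially the same route as the paper: condition on $A_{J^c}$ (so that $H_{J^c}^\perp$ is a fixed $(2d-1)$-dimensional subspace of full codimension almost surely), observe that $Wx/\norm{x}$ is then isotropic log-concave in that subspace, replace $w$ by its projection, and apply \refT{thm:Bizeul} together with $\norm{x}\ge 1/2$. Your explicit verification that $Wx$ has covariance $\norm{x}^2 I_{2d-1}$ via the sum $\sum_k x_k\,PA_{j_k}$ just spells out what the paper asserts more briefly.
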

\begin{proof}
    Note that the columns of $A$ are linearly independent with probability 1 (we argued this in our proof of Theorem \ref{thm:unconditional} in section \ref{sec:uncond}). Therefore we may assume that $H_{J^c}$ has co-dimension $N - |J^c| = N-(n-d) = 2d-1$. From the definition of $A$ and the independence of its columns, the columns of $A_J$,  conditioned on $H_{J^c}$, are distributed as independent isotropic log-concave vectors. Therefore $A_Jx/\norm{x}$ is distributed as an $N$-dimensional isotropic log-concave vector and $Wx/\norm{x}$ is distributed as a $2d-1$-dimensional isotropic log-concave vector in $H_{J^c}^\perp$. Observe now that
    \[
    \norm{Wx - w} \ge \norm{Wx - P_{H_{J^c}^\perp}w} \ge (1/2)\norm{Wx/\norm{x} - P_{H_{J^c}^\perp }w/\norm{x}}.
    \]
    Since $P_{H_{J^c}^\perp}w \in H_{J^c}^\perp$ is fixed when conditioned on $H_{J^c}$ we have, by Theorem \ref{thm:Bizeul}, that 
    \[
    \bp(\norm{Wx/\norm{x} - P_{H_J^c}w/\norm{x}} \le 2t\sqrt{2d-1}) \le (Ct)^{2d-1}.
    \]
    The result follows.
\end{proof}
 Lastly, because $d$ may be much smaller than $N$, we will need an additional ingredient. In particular, we will need the following decoupling result. 
\begin{lem}\citep[Lemma 8.8]{L21}
\label{lem:decouple}
    Let $d \geq 2$. Let $W$ be an $N \times d$ random matrix with independent columns. Let $z \in \frac{3}{2}B_2^d \setminus \frac{1}{2}B_2^d$ be such that $\abs{z_k} \geq c/\sqrt{d}$. Then for every $0 < a < b$ we have 
    \[
    \bp\lrpar{\norm{Wz} < a, \hnorm{W}_{HS} > b} \leq 2 \bp\lrpar{\hnorm{W}_{HS} > \frac{b}{2}} \sup_{x \in \frac{3}{2}B_2^d \setminus \frac{1}{2}B_2^d, w \in \R^N} \bp\lrpar{\norm{Wx - w} < Ca},
    \]
    where $C > 0$ is an absolute constant.
\end{lem}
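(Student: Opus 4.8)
The plan is to prove \refL{lem:decouple} by a conditioning argument that \emph{decouples} the small-ball event $\{\norm{Wz}<a\}$ from the large-norm event $\{\hnorm{W}_{HS}>b\}$, spending the factor $2$ on a dichotomy for where the mass of $\hnorm{W}_{HS}^2$ sits. Fix an index $k$ with $\abs{z_k}\ge c/\sqrt d$ and write $W=[W_1\mid\cdots\mid W_d]$. Let $\bar W$ be the $N\times(d-1)$ submatrix of the columns $W_j$, $j\ne k$, let $\bar z\in\R^{d-1}$ be $z$ restricted to the coordinates $\ne k$, and set $\hat z:=\bar z/\norm{\bar z}$, a unit vector supported off coordinate $k$. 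Since $z$ is spread (Definition \ref{def:spread}), $\norm{\bar z}^2=\sum_{j\ne k}z_j^2\ge c'$ for an absolute $c'>0$ (using $d\ge 2$), so $\hat z$ is a \emph{genuine unit vector} lying in the shell $\frac32 B_2^d\setminus\frac12 B_2^d$ and dividing by $\norm{\bar z}$ costs only an absolute constant. Note that $Wz=z_kW_k+\norm{\bar z}\,\bar W\hat z$ and $\bar W\hat z=W\hat z$ (because $\hat z$ vanishes on coordinate $k$); in particular $W\hat z$ and the events $\{\hnorm{\bar W}_{HS}>t\}$ depend only on the columns $W_j$, $j\ne k$, hence are independent of $W_k$.

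Condition on $W_k$. Since $\hnorm{W}_{HS}^2=\norm{W_k}^2+\hnorm{\bar W}_{HS}^2$, on $\{\hnorm{W}_{HS}>b\}$ at least one of $\norm{W_k}>b/2$, $\hnorm{\bar W}_{HS}>b/2$ holds, so
\[
\bp\bigl(\norm{Wz}<a,\ \hnorm{W}_{HS}>b\bigr)\le\bp\bigl(\norm{Wz}<a,\ \norm{W_k}>b/2\bigr)+\bp\bigl(\norm{Wz}<a,\ \hnorm{\bar W}_{HS}>b/2\bigr).
\]
For the first term, $\{\norm{W_k}>b/2\}$ is $\sigma(W_k)$-measurable and forces $\hnorm{W}_{HS}>b/2$; conditionally on $W_k$ the event $\{\norm{Wz}<a\}$ reads $\{\norm{W\hat z-w_0}<a/\norm{\bar z}\}$ with the fixed vector $w_0:=-z_kW_k/\norm{\bar z}$, and since $W\hat z$ is independent of $W_k$ this conditional probability is at most $\sup_{w\in\R^N}\bp(\norm{W\hat z-w}<a/\norm{\bar z})\le\sup_{x\in\frac32 B_2^d\setminus\frac12 B_2^d,\,w\in\R^N}\bp(\norm{Wx-w}<Ca)$, using $\hat z\in\frac32 B_2^d\setminus\frac12 B_2^d$ and $a/\norm{\bar z}\le Ca$. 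Integrating over $\{\norm{W_k}>b/2\}\subseteq\{\hnorm{W}_{HS}>b/2\}$ bounds the first term by $\bp(\hnorm{W}_{HS}>b/2)\sup_{x,w}\bp(\norm{Wx-w}<Ca)$, which is half of the target.

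The second term is the crux. Conditioning on $W_k$ again, it becomes an estimate on $\bp\bigl(\norm{\bar W\hat z-w_0}<a/\norm{\bar z},\ \hnorm{\bar W}_{HS}>b/2\bigr)$ — a bound of \emph{exactly the same shape} as \refL{lem:decouple}, but for the $(d-1)$-column matrix $\bar W$, the unit vector $\hat z$, a fixed shift $w_0$, and threshold $b/2$. The natural route is to iterate: peel column $k$, then a column of $\bar W$, and so on, at each step conditioning on the peeled column and splitting on whether that column or the surviving submatrix carries the Hilbert–Schmidt mass; the recursion terminates once the surviving submatrix has a bounded number of columns, where the shell constraint forces all surviving coordinates of the renormalized vector to lie between absolute constants, so the last reduction to $\sup_{x,w}\bp(\norm{\cdot\,x-w}<Ca)$ costs only an absolute factor in the radius. \textbf{The main obstacle} is making this recursion quantitatively clean: one must (a) carry a fixed shift through every step — so one actually proves, and uses, the shifted version of the statement even though only the shift-free form is recorded — and (b) track the thresholds, which degrade by a constant factor at each peel, so that the product of the surviving large-norm probability and the accumulated small-ball factors still collapses to $2\,\bp(\hnorm{W}_{HS}>b/2)\sup_{x,w}\bp(\norm{Wx-w}<Ca)$. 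The essential principle throughout is that the small-ball event must be kept \emph{spread} across many columns and never collapsed onto the single column $W_k$: the latter would force a ball of radius $a/\abs{z_k}\asymp a\sqrt d$, which would destroy the absolute constant in the conclusion.
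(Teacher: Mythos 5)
Your handling of the case $\{\norm{W_k}>b/2\}$ is correct, and the general framework (a dichotomy for where the Hilbert--Schmidt mass sits, conditioning on the part that carries it, and using independence to decouple the shifted small-ball event) is the right one. But the case you yourself call the crux, $\{\hnorm{\bar W}_{HS}>b/2\}$, is left to a recursion that is not carried out and that, as sketched, cannot close. To exhaust the columns you need $\Theta(d)$ peels; the threshold degrades by a constant factor at each peel, so the surviving large-norm event becomes $\{\hnorm{\cdot}_{HS}>b/2^{\Theta(d)}\}$, which no longer implies $\{\hnorm{W}_{HS}>b/2\}$, and the per-step factor $2$ compounds to $2^{\Theta(d)}$ rather than $2$. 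Worse, the radius accumulates the full renormalization: after peeling down to a surviving set $S$ of $O(1)$ coordinates the conditional small-ball radius is $a/\norm{z_S}$, and for a spread vector $\norm{z_S}\asymp \sqrt{|S|/d}\asymp 1/\sqrt d$, so the radius is $\asymp a\sqrt d$ --- exactly the collapse onto few columns that you correctly identify as fatal. So the ``main obstacle'' you flag is not a bookkeeping issue; it is the missing idea.

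The proof in \cite{L21} (Lemma 8.8, in the spirit of the decoupling in \cite{RudVer-general}) avoids any recursion by making a \emph{single balanced split}: partition the column indices into two deterministic halves $S_1,S_2$ of size $\asymp d/2$. Since \emph{every} coordinate of $z$ satisfies $|z_j|\ge c/\sqrt d$ (this is how the hypothesis is meant, and how it is used --- note that ``there exists $k$ with $|z_k|\ge c/\sqrt d$'' is vacuous for shell vectors with $c\le 1/2$), both restrictions satisfy $\norm{z_{S_i}}\ge c/\sqrt 2$, an absolute constant. On $\{\hnorm{W}_{HS}>b\}$ one of $\hnorm{W_{S_1}}_{HS},\hnorm{W_{S_2}}_{HS}$ exceeds $b/2$; say it is $W_{S_2}$. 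That event is $\sigma(W_{S_2})$-measurable and implies $\hnorm{W}_{HS}>b/2$, and conditionally on $W_{S_2}$ the event $\{\norm{Wz}<a\}$ becomes $\{\norm{W_{S_1}z_{S_1}-w}<a\}$ with $w$ fixed; writing $W_{S_1}z_{S_1}=Wx$ for $x$ equal to $z$ zeroed outside $S_1$ and rescaling by the absolute constant $1/\norm{z_{S_1}}\le \sqrt2/c$ to land $x$ in the shell, this is at most $\sup_{x,w}\bp(\norm{Wx-w}<Ca)$ by independence of the two column blocks. The symmetric case gives the second term, whence the factor $2$. Your single-column peel works precisely because $\norm{\bar z}$ is of constant order, and the balanced split is what lets you run the same argument in \emph{both} directions of the dichotomy in one step, with no shift-tracking or threshold decay.
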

Note that Lemma \ref{lem:decouple} is applicable for $z \in {\rm spread}_d$. 
\begin{proof}[Proof of \refT{thm:indep-columns}]
As argued earlier, it suffices to show that

\[
\bp\lrpar{\inf_{x \in \incomp(\delta,\rho)}\norm{Ax} \le \frac{\eps d}{\sqrt{n}}} \le (C\eps)^d + e^{-cN}
\]

when $1 \le N/n \le 1+\lambda$ for $\lambda > 0$ a sufficiently small constant. Therefore, by \refL{lem:invert-via-dist} it suffices to show that 

\[
\bp\lrpar{\inf_{x \in {\rm spread}_d} \norm{Wx} \le \eps\sqrt{d}} \le (C\eps)^d + e^{-cN},
\]
since, in view of $N \le (1+\lambda)n$, we may take $\lambda$ sufficiently small so that $C^de^{-cN} \le e^{-(c/2)N}$.
To that end we define the following sets of events:
\begin{align*}
    \cE_1 &:= \left\{ \inf_{x \in {\rm spread}_d}\norm{Wx} \le \eps\sqrt{d}, \hnorm{W}_{HS} < C_1d\right\}. \\
    \cE_{2,i} &:= \left\{\inf_{x \in {\rm spread}_d}\norm{Wx} \leq 2^i\eps\sqrt{d}, C_12^id \le \hnorm{W}_{HS} < C_12^{i+1}d \right\}.\\
    \cE_3 &:= \left\{\hnorm{W}_{HS} \geq C_1\sqrt{Nn}\right\}.
\end{align*}

Taking $m = \lceil \log_2(C_1\sqrt{Nn}/d)\rceil$,
one can check that the event $\lrset{\inf_{x \in {\rm spread}_d} \norm{Wx} \leq \eps\sqrt{d}}$ is contained in $\cE_1 \cup \lrpar{\cup_{1 \leq i \leq m} \cE_{2,i}} \cup \cE_3$.
Therefore,

\begin{align*}
\bp\lrpar{\inf_{x \in {\rm spread}_d}\norm{Wx} \leq \eps\sqrt{d}} \leq 
\bp\lrpar{\cE_1 \cup \lrpar{\cup_{1 \leq i \leq m}\cE_{2,i}}  \cup \cE_3} \leq \bp(\cE_1) + \bp(\cE_3) + \sum_{i = 0}^m \bp(\cE_{2,i}).
\end{align*}

We now bound these probabilities. We start with $\bp(\cE_3)$. Note that the entries of $W$ are jointly distributed as a $d^2$-dimensional isotropic log-concave vector in $\R^{Nd}$. Therefore by Paouris's inequality we have  

\[
\bp\lrpar{\cE_3} \leq \exp(-c\sqrt{Nn}) \le \exp(-c'N),
\]
where for the last inequality we used the assumption on the aspect ratio.  

We next bound $\bp(\cE_1)$.
    Let $\cN \subset \frac{3}{2}B_2^d \setminus \frac{1}{2}B_2^d$ be the net guaranteed by Lemma \ref{lem:net-full} of cardinality at most $(C_2/\eps)^{d-1}$ satisfying
    
    \[
    \sup_{x \in \bS^{n-1}} \inf_{y \in \cN} \norm{W(x-y)} \le \frac{(\eps/C_1)}{\sqrt{d}}\hnorm{W}_{HS}.
    \]
    
    Suppose now that $\inf_{y \in \cN} \norm{Wy} > 2\eps\sqrt{d}$ and $\hnorm{W}_{HS} < C_1d$. Then
    
    \[
    \inf_{x \in {\rm spread}_d}\norm{Wx} > \inf_{y \in \cN}\norm{Wy} - \sup_{x \in {\rm spread}_d} \inf_{y \in \cN} \norm{W(x-y)} \ge 2\eps\sqrt{d} - \eps\sqrt{d} = \eps \sqrt{d}.
    \]
    
    Therefore 
    
    \begin{align*}
    \bp\lrpar{\cE_1} 
    &\leq \bp\lrpar{\inf_{y \in \cN} \norm{Wy} \leq 2\eps\sqrt{d},\hnorm{W}_{HS} < C_1d}, \\
     &\leq \bp\lrpar{\inf_{y \in \cN} \norm{Wy} \leq 2\eps\sqrt{d}},\\
    &\leq |\cN|\sup_{x \in \frac{3}{2}B_2^d\setminus \frac{1}{2}B_2^d}\bp\lrpar{\norm{Wx} \leq 2\eps\sqrt{d}}\\
    &\leq (C_2/\eps)^{d-1}(C_3\eps)^{2d-1} \\
    &\leq (C\eps)^{d}.
    \end{align*}
    
We now bound $\bp(\cE_{2,i})$. Suppose that $\inf_{y \in \cN}\norm{Wy} > 2^{i+2}\eps\sqrt{d}$ and $\hnorm{W}_{HS} < C_12^{i+1}d$. Then

\[
    \inf_{x \in {\rm spread}_d}\norm{Wx} \ge \inf_{y \in \cN}\norm{Wy} - \sup_{x \in {\rm spread}_d} \inf_{y \in \cN} \norm{W(x-y)} \ge 2^{i+2}\eps\sqrt{d} - 2^{i+1}\eps\sqrt{d} = 2^{i+1}\eps \sqrt{d}.
\]    

    Therefore, using Lemma \ref{lem:decouple}, we have
    
    \begin{align*}
    \bp(\cE_{2,i}) 
    & \leq \bp\lrpar{\inf_{y \in \cN_i} \norm{Wy} \le 2^{i+2}\eps\sqrt{d}, C_12^id \le \hnorm{W}_{HS} < C_12^{i+1}d},\\
    & \leq \bp\lrpar{\inf_{y \in \cN_i} \norm{Wy} \leq 2^{i+2}\eps\sqrt{d}, \hnorm{W}_{HS} \ge C_12^id},\\
    &\leq \sum_{y \in \cN_i}\bp\lrpar{\norm{Wy} \leq 2^{i+2}\eps\sqrt{d}, \hnorm{W}_{HS} \geq C_12^id},\\
    &\leq 2|\cN_i|\bp\lrpar{\hnorm{W}_{HS} \geq C_12^id}\sup_{x \in \frac{3}{2}B_2^n\setminus\frac{1}{2}B_2^n, w \in \R^N}\bp\lrpar{\norm{Wx-w} < C_32^i\eps\sqrt{d}},\\
    &\leq 
    2(C_2/\eps)^{d-1}\exp(-c2^id)(C_42^i\eps)^{2d-1}, \\
    &\leq (C\eps)^d\exp((2d-1)i\log(2)-c2^id).
    \end{align*}
    
Since $\sum_{i \ge 1} \exp((2d-1)i\log(2)-c2^id) < C^d$, we conclude that $ \sum_{i \ge 1}\bp(\cE_{2,i}) \le (C\eps)^d$. All together, we have that

\[
\bp(\cE_1) + \sum_{i \ge 1}\bp(\cE_{2,i}) + \bp(\cE_3)  \leq (C\eps)^d + (C\eps)^d + e^{-cN} \le (C\eps)^d + e^{-cN} \le (C\eps)^{N-n+1} + e^{-cN},
\] 

with the last inequality following from the fact that  $d=N-n+1$.
\end{proof}

\end{document}